\newtheorem{theorem}{Theorem}[section]
\newtheorem{proposition}[theorem]{Proposition}
\newtheorem{lemma}[theorem]{Lemma}
\newtheorem{corollary}[theorem]{Corollary}
\newtheorem{definition}[theorem]{Definition}
\newtheorem{problem}[theorem]{Problem}
\newtheorem{conjecture}[theorem]{Conjecture}
\theoremstyle{plain}
\theoremstyle{remark}
\newtheorem{remark}[theorem]{Remark}
\newtheorem{example}[theorem]{Example}
\newcommand{\C}{{\mathbb C}}
\newcommand{\Q}{{\mathbb Q}}
\newcommand{\R}{{\mathbb R}}
\newcommand{\Z}{{\mathbb Z}}
\newcommand{\N}{{\mathbb N}}
\newcommand{\cA}{{\mathcal A}}
\newcommand{\cK}{{\mathcal K}}
\newcommand{\cJ}{{\mathcal J}}
\newcommand{\Qbar}{\bar{\Q}}
\DeclareMathOperator{\Gal}{Gal}
\DeclareMathOperator{\Norm}{N}
\newcommand{\bP}{{\mathbb P}}
\newcommand{\cO}{\mathcal{O}}
\newcommand{\cD}{\mathcal{D}}
\newcommand{\cG}{\mathcal{G}}
\author{Khoa D.~Nguyen}
\address{
Khoa D.~Nguyen \\
Department of Mathematics and Statistics\\
University of Calgary\\
AB T2N 1N4, Canada
}
\email{dangkhoa.nguyen@ucalgary.ca}
\keywords{Polynomial canonical heights, transcendental numbers, B\"ottcher coordinates, Medvedev-Scanlon classification}
\subjclass[2010]{Primary: 37P05, 37P30. Secondary: 11J81}
\begin{document}
	\title{Transcendence of polynomial canonical heights}
	
	\date{29 December 2021}
	
	\begin{abstract}
	There are two fundamental problems motivated by Silverman's conversations over the years concerning the nature of the exact values of canonical heights of $f(z)\in\bar{\mathbb{Q}}(z)$ with $d:=\deg(f)\geq 2$. The first problem is the  
	conjecture that $\hat{h}_f(a)$ is either $0$ or transcendental for every $a\in \mathbb{P}^1(\bar{\mathbb{Q}})$; this holds when $f$ is linearly conjugate to
	$z^d$ or $\pm C_d(z)$ where $C_d(z)$ is the Chebyshev polynomial of degree $d$
	since $\hat{H}_f(a)$ is algebraic for every $a$. Other than this, very little is known: for example, it is not known if there \emph{exists} 
	even \emph{one} rational number $a$ such that  
	$\hat{h}_f(a)$ is \emph{irrational} where $f(z)=z^2+\displaystyle\frac{1}{2}$. The second problem asks for the characterization of all pairs $(f,a)$ such that $\hat{H}_f(a)$ is algebraic. In this paper,  
	we solve the second problem and obtain significant progress to the first problem in the case of polynomial dynamics. These are consequences of our main result concerning the possible algebraic numbers that can be expressed as a multiplicative combination of values of B\"ottcher coordinates. The proof of our main result uses a construction of a certain auxiliary polynomial and the powerful Medvedev-Scanlon classification of preperiodic subvarieties of split polynomial maps.
	\end{abstract}
	
	\maketitle
	
\section{Introduction}	
	For many decades, the theory of Weil heights and canonical heights
	has become not only an indispensable tool in diophantine geometry and arithmetic dynamics
	but also a highly interesting subject of its own. Yet the nature of the exact values of canonical height functions remains very mysterious. For the dynamics of univariate rational functions, we have the following:
	\begin{conjecture}[Silverman]\label{conj:0 or transcendental}
	Let $f(z)\in\Qbar(z)$ with $d:=\deg(f)\geq 2$. For every $a\in\bP^1(\Qbar)$, we have that $\hat{h}_f(a)$ is either $0$ or transcendental.
	\end{conjecture}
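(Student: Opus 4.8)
The plan is to reduce Conjecture~\ref{conj:0 or transcendental} to a transcendence statement about multiplicative combinations of values of dynamical uniformizing coordinates, and to settle that statement via the main theorem of this paper together with the Hermite--Lindemann theorem. If $a$ is preperiodic then $\hat h_f(a)=0$ and we are done, so assume $a$ is not preperiodic and suppose for contradiction that $\alpha:=\hat h_f(a)\in\Qbar\setminus\{0\}$. Fix a number field $K$ containing the coefficients of $f$ and the coordinates of $a$ and use the Call--Silverman decomposition
\[
[K:\Q]\,\hat h_f(a)\ =\ \sum_{v\in M_K}[K_v:\Q_v]\,\hat\lambda_{f,v}(a)
\]
into local canonical heights. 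At each place $v$ where $f$ has good reduction and $a$ avoids the exceptional locus one has $\hat\lambda_{f,v}(a)=\log^{+}|a|_v$, so after exponentiation these places contribute only a positive rational factor; all of the difficulty is concentrated at the finite set $S$ of archimedean and bad places.

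Suppose first $f\in\Qbar[z]$. Conjugating $f$ by a $\Qbar$-affine map — which preserves the canonical height and keeps the relevant point algebraic — we may take $f$ monic, so for each place $v$ there is a B\"ottcher coordinate $\phi_{f,v}$ with $\phi_{f,v}(f(z))=\phi_{f,v}(z)^{d}$ and $\phi_{f,v}(z)=z+O(1)$ near $\infty$; at each $v\in S$ where the orbit of $a$ escapes, $\hat\lambda_{f,v}(a)=d^{-N}\log|\phi_{f,v}(f^{N}(a))|_v$ for $N$ large, while $\hat\lambda_{f,v}(a)=0$ when $a$ lies in the filled Julia set at $v$. Combining all places and clearing denominators, the assumption $\hat h_f(a)=\alpha$ yields an identity
\[
e^{[K:\Q]\,\alpha}\ =\ q\cdot\prod_{i}\bigl|\phi_{f,v_i}(a_i)\bigr|_{v_i}^{m_i},\qquad q\in\Q_{>0},\ a_i\in\Qbar,\ m_i\in\Z
\]
exhibiting $e^{[K:\Q]\alpha}$ as $q$ times a multiplicative combination of B\"ottcher values at algebraic points. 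The paper's main theorem governs which algebraic numbers can occur this way: if this combination is algebraic then either it is trivial, so $e^{[K:\Q]\alpha}=q\in\Q_{>0}$, which by Hermite--Lindemann is impossible for algebraic $\alpha\neq0$; or $f$ is linearly conjugate to $z^d$ or $\pm C_d$, where the conjecture is already known. There remains the case in which the combination is transcendental: one must then show it is never of the shape $e^{\gamma}$ with $\gamma$ algebraic and nonzero, i.e.\ that the transcendental B\"ottcher combinations produced above never have algebraic logarithm. Establishing this for \emph{every} polynomial $f$, a strengthening of the present main theorem, is the first main obstacle, and is precisely where the current results give only partial information.

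For general $f\in\Qbar(z)$ there is no global B\"ottcher coordinate, and the plan is to run the same scheme with the local analytic conjugacies attached to the periodic cycles of $f$: B\"ottcher coordinates at superattracting cycles, Koenigs linearizers at the attracting and repelling cycles, and their analogues at parabolic cycles and at the bad non-archimedean places. On the Fatou set the local canonical height $\hat\lambda_{f,v}$ is controlled by these uniformizers, so algebraicity of $\hat h_f(a)$ again presents $e^{[K:\Q]\hat h_f(a)}$ as a positive rational times a multiplicative combination of values of dynamical linearizers at algebraic points, and one needs the rational-map analogue of the main theorem. The proposed route mirrors the polynomial proof: construct an auxiliary polynomial cutting out the algebraic relations among the relevant linearizer values, reinterpret its zero set as a preperiodic subvariety of a suitable power $(\bP^1,f)^{k}$, and apply the Medvedev--Scanlon classification of preperiodic subvarieties of split polynomial maps — extended to the present setting — to force the relation to be trivial, after which Hermite--Lindemann closes the argument exactly as above.

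The second, and I expect larger, main obstacle is this last extension. The Medvedev--Scanlon theorem is formulated for split polynomial maps, whereas the systems relevant here are non-split; transferring both the auxiliary-variety construction and the classification to linearizers of arbitrary rational maps requires genuinely new input — in particular to treat Latt\`es and other postcritically finite maps, to control the simultaneous contributions of several bad places, and to cope with the fact that distinct uniformizers of the \emph{same} map need not be multiplicatively comparable. Together with the transcendence gap noted above — which it would also be natural to attack through a sufficiently strong linear-forms-in-logarithms statement for the local Green's functions — this extension is what separates the significant progress recorded in the present paper from a complete proof of Conjecture~\ref{conj:0 or transcendental}, and I expect essentially all of the remaining difficulty to sit in these two places.
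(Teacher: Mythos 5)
This statement is labelled a \emph{conjecture} in the paper for a reason: the paper does not prove it, and neither does your proposal. What the paper actually establishes is Theorem~\ref{thm:main} and its two corollaries --- the algebraicity criterion for the \emph{multiplicative} height $\hat H_f(a)=\exp(\hat h_f(a))$ (Corollary~\ref{cor:algebraicity of hatH}) and a $\Q$-linear independence statement for several logarithmic canonical heights under local escape hypotheses (Corollary~\ref{cor:irrational hhat}); that is the ``significant progress'' claimed, and it falls well short of Conjecture~\ref{conj:0 or transcendental} even for polynomials. Your reduction in the polynomial case is essentially the computation in the proof of Corollary~\ref{cor:algebraicity of hatH}: pair complex-conjugate embeddings so that the archimedean contribution becomes a genuine product of B\"ottcher values at algebraic points (you elide this step, and you also need the non-archimedean factor to be merely a positive real algebraic number, not an element of $\Q_{>0}$ --- Lemma~\ref{lem:gfv=clogp} gives $p^{c}$ with $c\in\Q$), then invoke Theorem~\ref{thm:main}. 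As far as it goes, this is sound and matches the paper.

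The genuine gap is the one you yourself flag, and it is worth being precise about why it is fatal rather than merely ``partial information.'' If $\alpha=\hat h_f(a)$ is a nonzero algebraic number, then Hermite--Lindemann already forces $e^{[K:\Q]\alpha}$ to be transcendental, so in your dichotomy the ``algebraic combination'' branch is vacuous: you are \emph{always} in the branch where the B\"ottcher combination is transcendental, and Theorem~\ref{thm:main} says nothing there. Equivalently, the paper's machinery shows $\hat H_f(a)$ is transcendental whenever there is an archimedean contribution, but the logarithm of a transcendental number can perfectly well be a nonzero algebraic number, so no conclusion about $\hat h_f(a)$ follows. Closing this would require a new transcendence input (a statement that these B\"ottcher combinations never have nonzero algebraic logarithm), which is not in the paper and is not supplied by your argument; your second obstacle (general rational maps, no global B\"ottcher coordinate, non-split systems outside the scope of Medvedev--Scanlon) is likewise real and unaddressed. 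So the proposal should be read as a correct description of why the conjecture is open, not as a proof of it; the route the paper actually takes to unconditional progress on $\hat h_f$ is the linear-independence argument of Corollary~\ref{cor:irrational hhat}, which sidesteps the logarithm issue by comparing several heights at once rather than deciding the transcendence of a single one.
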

	
	As explained in Silverman's comments \cite{Sil13_MO}, this conjecture originates from his earlier
	conversations over the years about the canonical heights of non-torsion points on elliptic curves. 
	Conjecture~\ref{conj:0 or transcendental} together with some related problems and comments 
	also appear in \cite{Ngu15_AI}. Conjecture~\ref{conj:0 or transcendental} is 
	in stark contrast to known results over function fields by Chatzidakis-Hrushovski 
	\cite[Lemma~4.21]{CH08_DF1} in which values of canonical heights are usually algebraic (it appears 
	that in \cite{CH08_DF1}, the authors use the notation $H_{D}$ to denote a 
	certain \emph{logarithmic} canonical height) as well as more recent rationality 
	results by DeMarco-Ghioca \cite{DMG19_RO}.
	
	There is another problem that is an analogue of Conjecture~\ref{conj:0 or transcendental} for 
	the multiplicative canonical height $\hat{H}_f=\exp(\hat{h}_f)$. 
	A variant of the following has been suggested in Silverman's comments \cite{Sil13_MO} as well:
	\begin{problem}[Silverman]\label{prob:Silverman}
	Give a characterization of $(f,a)$ where $f(z)\in\Qbar(z)$ has degree $d\geq 2$ and $a\in\bP^1(\Qbar)$ such
	that $\hat{H}_f(a)$ is algebraic.
	\end{problem}
	
	Throughout this paper, $\N$ denotes the set of positive integers and $\N_0:=\N\cup\{0\}$. Let $C_d(z)$ be the Chebyshev polynomial of degree $d$: it satisfies the functional equation $\displaystyle C_d\left(z+\frac{1}{z}\right)=z^d+\frac{1}{z^d}$. When $f(z)\in\Qbar(z)$ is linearly conjugate to $z^d$ or $\pm C_d(z)$, we have that $\hat{H}_f(a)$ is algebraic for every $a\in \bP^1(\Qbar)$ and hence $\hat{h}_f(a)=\log(\hat{H}_f(a))$ is either $0$ or transcendental thanks to
Lindemann's theorem \cite{Lin82_UD}. Other than this, very little is known about Conjecture~\ref{conj:0 or transcendental}. It appears that we do not know even one example of a rational number $a$  
such that $\hat{h}_f(a)$ is \emph{irrational} where $\displaystyle f(z)=z^2+\frac{1}{2}$.

	As immediate consequences of our main result, we resolve Problem~\ref{prob:Silverman} and make first significant progress to Conjecture~\ref{conj:0 or transcendental} in the case of polynomial dynamics.
	From now on, we consider the case $f(z)\in \Qbar[z]$. We use $\cJ_f$ and $\cK_f$ respectively to denote the Julia set and filled Julia set of $f$. Let $a\in\Qbar$. A very artificial way to force the algebraicity of $\hat{H}_f(a)$ is to require that $\sigma(a)\in \cK_{\sigma(f)}$ for every $\sigma\in \cG:=\Gal(\Qbar/\Q)$. Let $K$ denote a number field containing $a$ and the coefficients of $f$ then the above property is equivalent to having $\sup_{n}\vert f^n(a)\vert_v <\infty$
	for every archimedean place $v$ of $K$. Algebraicity of $\hat{H}_f(a)$ follows from the fact
	that the non-archimedean contribution to $\hat{H}_f(a)$ is an algebraic number while the (logarithm of the)  archimedean contribution to
	$\hat{H}_f(a)$ vanishes; this is reminiscent to the situation over function fields in which there is no archimedean contribution at all. Moreover if $a$ is not $f$-preperiodic then we have an example in which $\hat{h}_f(a)$ is transcendental thanks to Lindemann's theorem again. As mentioned in \cite{Ngu15_AI}, this strategy to produce an example works under the assumption that the interior of $\cK_{\sigma(f)}$ 
is non-empty for every $\sigma\in \cG$. On the other hand, when there exists $\sigma\in\cG$ such that
the interior of 
$\cK_{\sigma(f)}$ is empty (equivalently $\cK_{\sigma(f)}=\cJ_{\sigma(f)}$), it is a very hard problem in general to determine whether a fractal like $\cJ_{\sigma(f)}$ contains an algebraic number that is not $\sigma(f)$-preperiodic. This very hard problem is analogous to the notorious conjecture asserting the nonexistence of algebraic irrational numbers in the Cantor set.

	 Following the terminology in Favre-Gauthier book \cite{FG22_TA}, we have:
	 \begin{definition}
	 A polynomial of degree $d \ge 2$ is called \emph{integrable} if it is linearly conjugate to $z^d$ or $\pm C_d(z)$, otherwise it is said to be \emph{non-integrable}. We let $\mathcal D_d$ denote the set of non-integrable polynomials of degree $d$.
	\end{definition}
		
	Non-integrable polynomials are called disintegrated in Medvedev-Scanlon paper \cite{MS14_IV}. 
	For a polynomial $f(z)\in\C[z]$ of degree $d\geq 2$, a B\"ottcher coordinate of $f$ is a Laurent series:
	$$\phi_f(z)= a_1z+a_0+\frac{a_{-1}}{z}+\frac{a_{-2}}{z^2}+\ldots\in z\C[[1/z]]$$
	with $a_1\neq 0$ such that $\phi_f(f(z))=\phi_f(z)^d$. A B\"ottcher coordinate exists and is unique up to multiplication by a $(d-1)$-th root of unity. More details about B\"ottcher coordinates and polynomial canonical heights will be given in the next section. The basin of infinity of $f$ is the set of $a\in\C\cup\{\infty\}$ such that $\displaystyle\lim_{n\to\infty} f^n(a)=\infty$, i.e. the set $(\C\cup\{\infty\})\setminus\cK_f$. By a \emph{functional domain of convergence} of $\phi_f$, we mean a connected neighborhood $D$ of $\infty$ inside the basin of infinity such that
	$f(D)\subseteq D$ and $\phi_f$ is convergent on $D$; then we have $\phi_f(f(a))=\phi_f(a)^d$ for every $a\in D$. At first sight, our main result does not seem to have anything to do with canonical heights:
	
	\begin{theorem}\label{thm:main}
	Let $\alpha$ be an algebraic number with the following property. There exist $d\geq 2$, $r\in\N_0$, 
	 $f_1,\ldots,f_r\in\cD_d$, algebraic numbers $a_1,\ldots,a_r$ such that $a_i$ is in the 
	 functional domain of convergence of $\phi_{f_i}$ for every $i$, and 
	integers $n_1,\ldots,n_r$ such that:
	$$\alpha=\phi_{f_1}(a_1)^{n_1}\cdots\phi_{f_r}(a_r)^{n_r}.$$
	 Then $\alpha$ is a root of unity.	 
	\end{theorem}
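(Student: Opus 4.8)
The plan is to combine a multivariable Mahler-type construction --- this is the "auxiliary polynomial'' --- with the Medvedev--Scanlon classification, and to run the resulting dichotomy through an induction on $r$.

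\emph{Set-up and reductions.} Discard all indices with $n_i=0$, so $n_1,\dots,n_r$ are nonzero (the case $r=0$ gives $\alpha=1$), fix a number field $K$ containing $\alpha$, the $a_i$, and all coefficients, and note $\alpha\neq 0$ since each $\phi_{f_i}(a_i)\neq 0$. Write $F:=f_1\times\cdots\times f_r\colon\A^r\to\A^r$, $\mathbf a:=(a_1,\dots,a_r)$, $\mathbf a_m:=F^m(\mathbf a)\in K^r$, and $\psi_{f_i}:=\phi_{f_i}^{-1}$, so $\psi_{f_i}(w^d)=f_i(\psi_{f_i}(w))$. Iterating the Böttcher equation gives $\prod_i\phi_{f_i}(f_i^m(a_i))^{n_i}=\alpha^{d^m}$ for all $m$. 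I would use two inputs: first, a functional domain of convergence lies inside the basin of infinity, so $|\phi_{f_i}(a_i)|>1$ and $|f_i^m(a_i)|\to\infty$; second (from non-integrability and the cited literature), each $\phi_{f_i}$ is transcendental over $\C(z)$, from which one checks that $\psi_{f_1}(1/z_1),\dots,\psi_{f_r}(1/z_r)$ and the monomial $\prod_i z_i^{n_i}$ are algebraically independent over $\C$. The goal is that $\alpha$ be a root of unity.

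\emph{The auxiliary polynomial.} Put $\beta_i:=1/\phi_{f_i}(a_i)$, so $0<|\beta_i|<1$, $\psi_{f_i}(1/\beta_i^{d^m})=f_i^m(a_i)$, and $\prod_i(\beta_i^{d^m})^{n_i}=\alpha^{-d^m}$. For a large parameter $L$, by Siegel's lemma I would produce a nonzero Laurent polynomial $Q$ of controlled degree and height such that the analytic function $E(z_1,\dots,z_r):=Q\bigl(\prod_i z_i^{n_i},\,\psi_{f_1}(1/z_1),\dots,\psi_{f_r}(1/z_r)\bigr)$ vanishes (after clearing the obvious monomial denominators) to order $\ge M$ at the origin, with $M/L\to\infty$; this is possible because the $\asymp L^{r+1}$ coefficients of $Q$ outnumber the $\asymp M^{r}$ vanishing conditions as soon as $M\ll L^{1+1/r}$, and algebraic independence forces $E\not\equiv 0$. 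Evaluating at $B_m:=(\beta_1^{d^m},\dots,\beta_r^{d^m})\to\mathbf 0$: since $Q$ sees the data only through $\prod_i z_i^{n_i}$ (value $\alpha^{-d^m}$ at $B_m$) and the $\psi_{f_i}(1/z_i)$ (values $f_i^m(a_i)$), the number $E(B_m)=Q(\alpha^{-d^m},\mathbf a_m)$ is algebraic, lies in a fixed number field, and has height $O(Ld^m)$; meanwhile the high-order vanishing together with $|\beta_i^{d^m}|\le\rho^{d^m}$ for a fixed $\rho<1$ gives $\log|E(B_m)|\le -c\,Md^m$ for a constant $c>0$ and all large $m$. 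A Liouville inequality then forces $E(B_m)=0$ for all large $m$: otherwise choosing such $m$ large would give $M=O(L)$, contradicting $M/L\to\infty$.

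\emph{Conclusion via Medvedev--Scanlon and induction.} Thus the orbit tail $(\alpha^{-d^m},\mathbf a_m)$ lies on the proper subvariety cut out by $Q$ in $\A^1\times\A^r$, so its Zariski closure $W$ is proper and preperiodic for $G:=(z\mapsto z^d)\times F$. As $G$ has exactly one non-disintegrated factor, Medvedev--Scanlon shows every component of $W$ is $\{c\}\times W'$ or $\A^1\times W'$, with $c\in\{0\}\cup\mu_\infty$ and $W'$ a special subvariety of the disintegrated map $F$. On the component through the orbit tail, if the first coordinate equals the constant $c$ then $\alpha^{-d^m}=c$ for all large $m$, whence $\alpha^{d^m(d-1)}=1$ and $\alpha$ is a root of unity; otherwise the component is $\A^1\times W'$ with $W'\subsetneq\A^r$ a proper $F$-preperiodic subvariety containing every $\mathbf a_m$ with $m$ large. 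Replacing $F$ by an iterate and $\mathbf a$ by a forward image (which only replaces $\alpha$ by a power), assume $W'$ irreducible, $F$-invariant, with Zariski-dense orbit. A coordinate $x_i$ constant on $W'$ would make $a_i$ $f_i$-preperiodic, impossible since $a_i$ is in the basin of infinity; hence, $W'$ being proper, Medvedev--Scanlon yields a relation $x_j=h(x_i)$ on $W'$ with $h$ a polynomial semiconjugating an iterate of $f_i$ to the corresponding iterate of $f_j$. After a further forward shift, $a_j=h(a_i)$; by uniqueness of Böttcher coordinates $\phi_{f_j}\circ h=\zeta\,\phi_{f_i}^{\deg h}$ for a root of unity $\zeta$, so $\phi_{f_j}(a_j)^{n_j}=\zeta^{n_j}\phi_{f_i}(a_i)^{n_j\deg h}$; substituting, $\zeta^{-n_j}\alpha$ becomes a multiplicative combination of at most $r-1$ Böttcher values of non-integrable polynomials at points of their functional domains of convergence, hence a root of unity by induction, hence so is $\alpha$. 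The hardest part will be the Mahler estimate in this "inverse'' regime --- algebraic \emph{values} at transcendental points linked by one multiplicative relation --- namely choosing $Q$ so that $Q(\alpha^{-d^m},\mathbf a_m)$ stays algebraic of controlled height while the vanishing order remains large, proving the needed algebraic independence so that $E\not\equiv 0$, and above all disposing of the degenerate case ($E$ vanishing along the whole orbit): that is exactly where the precise shape of the Medvedev--Scanlon classification --- preperiodic subvarieties built from constant coordinates and graphs of semiconjugating polynomials --- is indispensable, both to kill preperiodic coordinates via the basin-of-infinity hypothesis and to drive the induction.
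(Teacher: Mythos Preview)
Your overall strategy matches the paper's: build an auxiliary polynomial so that the orbit $(\alpha^{d^m},f_1^m(a_1),\dots,f_r^m(a_r))$ is forced (via a Liouville estimate) to lie on a proper hypersurface, then invoke the Medvedev--Scanlon classification to produce a multiplicative relation between two of the B\"ottcher values and descend in $r$. That is exactly what the paper does, and the Medvedev--Scanlon endgame is essentially the same.

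There are, however, two points where your write-up deviates from the paper and would need repair.

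\textbf{The algebraic-independence claim is false, and unnecessary.} You assert that $\psi_{f_1}(1/z_1),\dots,\psi_{f_r}(1/z_r)$ together with $\prod_i z_i^{n_i}$ are algebraically independent over $\C$. But these are $r+1$ elements of $\C((z_1,\dots,z_r))$, a field of transcendence degree $r$ over $\C$; they are \emph{always} algebraically dependent. Fortunately you do not need $E\not\equiv 0$: if $E\equiv 0$ then $E(B_m)=0$ holds trivially, and you still conclude $Q(\alpha^{-d^m},\mathbf a_m)=0$ with $Q$ a nonzero polynomial. The paper sidesteps the whole issue by a cleaner choice of auxiliary data: it works directly with $\Phi(X_1,\dots,X_r)=\prod_i\phi_{f_i}(X_i)^{n_i}$ (no inversion to $\psi$) and takes $P(X_1,\dots,X_r,Y)=\sum_{\ell=1}^L P_\ell(X)Y^\ell$, imposing vanishing only on Laurent coefficients of $\mathcal A:=P(X,\Phi(X))$. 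Then $P\neq 0$ is immediate from some $P_\ell\neq 0$, and one never has to discuss whether $\mathcal A\equiv 0$. For the same reason the paper uses plain dimension counting rather than Siegel's lemma: no height control on the coefficients is needed, since the Liouville step is taken in the limit $k\to\infty$ with the implied constants absorbed.

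\textbf{The invariant curve need not be a graph.} You write that Medvedev--Scanlon gives $x_j=h(x_i)$ with $h$ semiconjugating iterates. In general the non-fibered invariant curve is only a parametrized curve $\{(A(t),B(t))\}$ with $f_i^m\circ A=A\circ Q$ and $f_j^m\circ B=B\circ Q$ for some polynomial $Q$ (this is Proposition~2.3(c) in the paper), and both $\deg A,\deg B$ may exceed $1$. The paper then proves (Proposition~2.4) that $\phi_{f_i}(A(t))/\psi(t)^{\deg A}$ and $\phi_{f_j}(B(t))/\psi(t)^{\deg B}$ are roots of unity, where $\psi$ is a B\"ottcher coordinate of $Q$; eliminating $\psi(t)$ yields $\phi_{f_i}(a_i)^{d^k\deg B}=\zeta\,\phi_{f_j}(a_j)^{d^k\deg A}$, which is the relation you need. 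Your stated identity $\phi_{f_j}\circ h=\zeta\,\phi_{f_i}^{\deg h}$ is the special case $\deg A=1$ and is not available in general.

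In short: the architecture of your proof is the paper's, but the paper's execution is tighter---it avoids the (false) independence assertion by the shape of $P$, drops Siegel for dimension counting, and handles the curve via the $(A,B,Q)$ parametrization rather than assuming a graph.
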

	
	\begin{remark}
	In this paper, the superscript $n$ means both the $n$-th iterate and the $n$-th power maps. We believe that this will not cause any confusion.
	\end{remark}
	
	The first consequence of Theorem~\ref{thm:main} resolves Problem~\ref{prob:Silverman} in 
	the case of polynomial dynamics:   
	\begin{corollary}\label{cor:algebraicity of hatH}
	Let $f\in\Qbar[z]$ be a non-integrable polynomial of degree $d\geq 2$ and let $a\in \Qbar$. The following are equivalent:
	\begin{itemize}
		\item [(i)] $\hat{H}_f(a)$ is algebraic.
		\item [(ii)] $\sigma(a)\in\cK_{\sigma(f)}$ for every $\sigma\in \cG$.
		\item [(iii)] For every archimedean place $v$ of $\Qbar$, we have $\displaystyle\sup_{n}\vert f^n(a)\vert_v<\infty$. 
	\end{itemize} 	
	\end{corollary}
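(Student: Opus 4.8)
The plan is to deduce Corollary~\ref{cor:algebraicity of hatH} from Theorem~\ref{thm:main} by decomposing the multiplicative canonical height $\hat{H}_f(a)$ into local factors and identifying the archimedean local factors with powers of Böttcher coordinates. First I would recall that the global canonical height decomposes as a product $\hat{H}_f(a)=\prod_v \hat{H}_{f,v}(a)$ of local canonical heights over all places $v$ of a number field $K$ containing $a$ and the coefficients of $f$ (with suitable multiplicities / normalizations). At a non-archimedean place $v$, the local canonical height $\hat{H}_{f,v}(a)$ is known to be an algebraic number — indeed a power of the residue characteristic times an explicitly computable rational exponent, coming from the reduction theory of $f$ — so the non-archimedean contribution is always algebraic. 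Thus the algebraicity of $\hat{H}_f(a)$ is equivalent to the algebraicity of the archimedean contribution $\prod_{v\mid\infty}\hat{H}_{f,v}(a)$.

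Next I would analyze each archimedean factor. Fix an archimedean place $v$ of $K$; after choosing an embedding it corresponds to some $\sigma\in\cG$, and $\hat{H}_{f,v}(a)$ is governed by the dynamics of $\sigma(f)$ on $\sigma(a)$ over $\C$. There are two cases. If $\sigma(a)\in\cK_{\sigma(f)}$ (equivalently $\sup_n |f^n(a)|_v<\infty$, which is condition (iii) locally), then the orbit is bounded, the escape-rate / Green's function $G_{\sigma(f)}$ vanishes at $\sigma(a)$, and $\hat{H}_{f,v}(a)=1$. If instead $\sigma(a)$ lies in the basin of infinity, then $\sigma(a)$ eventually enters a functional domain of convergence of the Böttcher coordinate $\phi_{\sigma(f)}$, and the local canonical height is $\hat{H}_{f,v}(a)=|\phi_{\sigma(f)}(\sigma(f)^m(a))|_v^{1/d^m}$ for $m$ large enough that $\sigma(f)^m(a)$ lies in that domain; equivalently $\log\hat{H}_{f,v}(a)=G_{\sigma(f)}(\sigma(a))=\lim_m d^{-m}\log|\phi_{\sigma(f)}(\sigma(f)^m(a))|$. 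The upshot is that $(i)\Leftrightarrow(ii)\Leftrightarrow(iii)$ reduces to: the archimedean product is algebraic if and only if none of the orbits escapes, i.e. no nontrivial Böttcher factor appears.

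The implications $(ii)\Leftrightarrow(iii)$ are essentially definitional (an orbit under a polynomial is bounded over $\C$ iff the point lies in the filled Julia set), and $(ii)\Rightarrow(i)$ is immediate from the preceding paragraph since every archimedean factor is then $1$ and the non-archimedean contribution is algebraic. The content is the implication $(i)\Rightarrow(ii)$, which is where Theorem~\ref{thm:main} enters and which I expect to be the main obstacle. Suppose $(ii)$ fails: some conjugate $\sigma(a)$ escapes to infinity. Running over all archimedean places (equivalently all embeddings $\sigma$), we get
$$\hat{H}_f(a)=\beta\cdot\prod_{i}\phi_{f_i}(a_i)^{n_i},$$
where $\beta$ is the algebraic non-archimedean contribution, the $f_i$ are the conjugates $\sigma(f)$ of $f$ for which $\sigma(a)$ escapes (these are non-integrable of degree $d$ since $f$ is, so $f_i\in\cD_d$), the $a_i$ are the corresponding points $\sigma(f)^{m}(\sigma(a))$ pushed into a functional domain of convergence (still algebraic), and the exponents $n_i$ are positive integers of the form (global multiplicity)$\cdot d^{-m}$ — here I must be careful to clear denominators, replacing each $a_i$ by a further iterate and using $\phi_{f_i}(f_i(z))=\phi_{f_i}(z)^d$ so that all exponents become integers while the product is unchanged up to a root of unity ambiguity. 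Then $\hat{H}_f(a)/\beta$ is a positive real algebraic number expressible as the kind of multiplicative combination of Böttcher values treated in Theorem~\ref{thm:main} — possibly after raising everything to a suitable power to absorb the root-of-unity indeterminacy in the choice of $\phi_{f_i}$ and to make $\beta^{-1}$ itself a product-of-Böttcher-type expression is not needed; rather one argues $\hat H_f(a)$ is algebraic, $\beta$ is algebraic, so the Böttcher product is algebraic, hence by Theorem~\ref{thm:main} it is a root of unity. But it is also a positive real number $>1$ (each escaping orbit forces $|\phi_{f_i}(a_i)|>1$ after pushing into the domain of convergence, since $G_{\sigma(f)}(\sigma(a))>0$ on the basin of infinity), and the only positive real root of unity is $1$ — a contradiction. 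Hence $(ii)$ holds. The delicate bookkeeping is (a) making the exponents integral without changing the value, which uses the Böttcher functional equation, and (b) correctly matching the classical (logarithmic) local canonical height normalization with $G_{\sigma(f)}(\sigma(a))=\log|\phi_{\sigma(f)}(\sigma(a))|$ on a functional domain of convergence; both of these should be recorded as lemmas in the next section before the corollary is proved.
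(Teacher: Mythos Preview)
Your proposal is correct and follows essentially the same route as the paper: split $\hat{H}_f(a)$ into a non-archimedean factor (algebraic by the elementary Lemma on $g_{f,v}$) and an archimedean factor, express the latter as a product of B\"ottcher values over the escaping conjugates, and invoke Theorem~\ref{thm:main} to force this product to be a root of unity, contradicting the fact that each factor has modulus strictly greater than $1$.

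Two small bookkeeping points where the paper is crisper than your sketch. First, the rational exponents on the B\"ottcher values come from two separate sources: a factor $d^{-m}$ from pushing each $a_i$ forward into the domain of convergence, and a factor $1/[K:\Q]$ from the normalization of the height. Your plan to ``clear denominators by replacing each $a_i$ by a further iterate'' handles only the first; the functional equation $\phi(f(z))=\phi(z)^d$ changes exponents by powers of $d$, not by $[K:\Q]$. The paper deals with the first source globally by replacing $a$ by $f^k(a)$ at the outset (which preserves the algebraicity of $\hat{H}_f(a)$), and with the second simply by raising to the $[K:\Q]$-th power. Second, to make the archimedean product literally equal to a positive real number (rather than merely of modulus $>1$, which is already enough for the contradiction), the paper enlarges $K$ so that it has no real embedding and then chooses, for each complex place, the two B\"ottcher coordinates of $\sigma(f)$ and $\bar\sigma(f)$ to be complex conjugate Laurent series; the product over each conjugate pair is then $\lvert\phi_{\sigma(f)}(\sigma(a))\rvert^2>1$. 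None of this changes the logic of your argument, only its presentation.
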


	
	\begin{remark}
	Our formulation of Corollary~\ref{cor:algebraicity of hatH} does not involve a number field $K$ over which $f$ and $a$ are defined. If we pick such a $K$ then in part (ii) we can replace ``$\sigma\in\cG$''
by ``embedding $\sigma:\ K\to\C$''. Similarly in part (iii), we can replace ``archimedean place $v$ of $\Qbar$'' by ``archimedean place $v$ of $K$''. 	
	  Corollary~\ref{cor:algebraicity of hatH} means the rather surprising fact that the artificial way to force the algebraicity of $\hat{H}_f(a)$ mentioned earlier is indeed the only way! 
	In other words, algebraicity of $\hat{H}_f(a)$ happens exactly when there is no archimedean contribution. This is in stark contrast to the case when $f$ is linearly conjugate to $z^d$ or $\pm C_d(z)$ in which $\hat{H}_f(a)$ is always algebraic.
	\end{remark}
	
	The second consequence of Theorem~\ref{thm:main} proves the existence of plenty of numbers having irrational logarithmic canonical height:
	\begin{corollary}\label{cor:irrational hhat}
	Let $r,d\geq 2$, let $K$ be a number field, and let $f_1(z),\ldots,f_r(z)\in K[z]$ be non-integrable polynomials of degree $d$. Let $S=\{p_1,\ldots,p_r\}$ be a set of $r$ distinct prime numbers.
	Let $a_1,\ldots,a_r\in K$ with the following property.
	For every $1\leq i\leq r$, there exists a place $v_i$ of $K$ lying above $p_i$ such that
	$\displaystyle\lim_{n\to\infty}\vert f_i^n(a_i)\vert_{v_i}=\infty$ while $\displaystyle\sup_n\vert f_i^n(a_i)\vert_{w}<\infty$ for every place $w$ of $K$ lying above any of the $p_j$ with $j\neq i$. Then the numbers $\hat{h}_{f_1}(a_1),\ldots,\hat{h}_{f_r}(a_r)$
	are linearly independent over $\Q$. Consequently, all but at most one of them are irrational.
	\end{corollary}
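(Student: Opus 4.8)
The plan is to split each $\hat{h}_{f_i}(a_i)$ into its non-archimedean part $\hat{h}_{f_i,\mathrm{fin}}(a_i)$ and its archimedean part $\hat{h}_{f_i,\infty}(a_i)$, to control the former by elementary means (unique factorization in $\Z$), and to control the latter using Theorem~\ref{thm:main}. The hypotheses on the primes $p_i$ are exactly what forces the non-archimedean contributions to isolate the index $i$, while Theorem~\ref{thm:main} rules out a nontrivial relation among the archimedean parts.

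First I would recall the standard local decomposition for polynomial dynamics. Normalizing each absolute value $|\cdot|_v$ of $K$ to extend the usual one of $\Q$, and writing $G_{f,v}(a):=\lim_{n\to\infty}d^{-n}\log^{+}|f^{n}(a)|_v\ge 0$, we have
\[
\hat{h}_{f_i}(a_i)=\frac{1}{[K:\Q]}\sum_v [K_v:\Q_v]\,G_{f_i,v}(a_i),
\]
the sum over all places $v$ of $K$. Here $G_{f,v}(a)>0$ precisely when $|f^{n}(a)|_v\to\infty$, i.e.\ $a\notin\cK_{f,v}$; one has $G_{f,v}(f(a))=d\,G_{f,v}(a)$; and for a finite place $v\mid p$, an easy computation with the leading coefficient of $f$ shows $G_{f,v}(a)$ is a non-negative rational multiple of $\log p$. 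Grouping finite places by residue characteristic, $\hat{h}_{f_i,\mathrm{fin}}(a_i)=\sum_p c_{i,p}\log p$ with $c_{i,p}\in\Q_{\ge 0}$, only finitely many $c_{i,p}$ nonzero. The hypotheses translate to $c_{i,p_j}=0$ for $j\ne i$ (the $f_i$-orbit of $a_i$ being bounded at every place above $p_j$) and $c_{i,p_i}>0$ (since $|f_i^{n}(a_i)|_{v_i}\to\infty$ forces $G_{f_i,v_i}(a_i)>0$). In particular $\hat{h}_{f_i}(a_i)\ge c_{i,p_i}\log p_i>0$ for every $i$.

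Next, suppose $\sum_i n_i\hat{h}_{f_i}(a_i)=0$ with $n_i\in\Z$. Exponentiating and separating the two parts,
\[
\prod_i \exp\bigl(n_i\,\hat{h}_{f_i,\infty}(a_i)\bigr)=\prod_p p^{-\sum_i n_i c_{i,p}},
\]
a positive algebraic number. I claim the left-hand side is a root of unity; being a positive real it then equals $1$, so $\prod_p p^{\sum_i n_i c_{i,p}}=1$, and unique factorization in $\Z$ forces $\sum_i n_i c_{i,p}=0$ for every prime $p$. Taking $p=p_i$ and using $c_{j,p_i}=0$ for $j\ne i$ gives $n_i\,c_{i,p_i}=0$, hence $n_i=0$. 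Thus $\hat{h}_{f_1}(a_1),\dots,\hat{h}_{f_r}(a_r)$ are linearly independent over $\Q$; since any two rational numbers are $\Q$-linearly dependent, at most one of them is rational, which is the final assertion.

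It remains to prove the claim, and this is where Theorem~\ref{thm:main} enters. Passing from places to embeddings, $\hat{h}_{f_i,\infty}(a_i)=\frac{1}{[K:\Q]}\sum_{\sigma\colon K\hookrightarrow\C}G_{\sigma(f_i)}(\sigma(a_i))$, where $G_g$ is the Green's function of $g\in\C[z]$; it vanishes on $\cK_g$, satisfies $G_g(g(z))=d\,G_g(z)$, and equals $\log|\phi_g|$ on a functional domain of convergence of $\phi_g$. Choosing $M$ large enough that $\sigma(f_i)^{M}(\sigma(a_i))$ lies in such a domain for each of the finitely many pairs $(\sigma,i)$ with $\sigma(a_i)\notin\cK_{\sigma(f_i)}$, we obtain
\[
\Bigl(\prod_i\exp\bigl(n_i\,\hat{h}_{f_i,\infty}(a_i)\bigr)\Bigr)^{[K:\Q]\,d^{M}}=|\beta|,\qquad \beta:=\prod_{\sigma}\prod_{\substack{i\,:\,\sigma(a_i)\notin\cK_{\sigma(f_i)}}}\phi_{\sigma(f_i)}\bigl(\sigma(f_i)^{M}(\sigma(a_i))\bigr)^{n_i}.
\]
Each $\sigma(f_i)$ is non-integrable of degree $d$ (integrability is preserved under embeddings: extend $\sigma$ to an automorphism of $\oQ$, note that a linearizing conjugacy of $\sigma(f_i)$ can be taken over $\oQ$, and transport it), and each argument of $\phi$ above is algebraic and in the relevant functional domain, so $\beta$ has exactly the form of the product in Theorem~\ref{thm:main}, \emph{provided} $\beta$ is algebraic. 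For the latter, write $|\beta|^{2}=\beta\overline{\beta}$ and use that complex conjugation carries a B\"ottcher coordinate of $g$ to a $(d-1)$-th root of unity times a B\"ottcher coordinate of $\overline{g}$; since $\sigma\mapsto\overline{\sigma}$ is an involution of the set of embeddings preserving the index set above, this yields $\overline{\beta}=\zeta\beta$ for some root of unity $\zeta$, so $\beta^{2}=\zeta^{-1}|\beta|^{2}$ is algebraic (as $|\beta|$ is a rational power product of primes), and hence $\beta$ is algebraic. Theorem~\ref{thm:main} then forces $\beta$ to be a root of unity, so $|\beta|=1$ and $\prod_i\exp(n_i\hat{h}_{f_i,\infty}(a_i))=1$, proving the claim. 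I expect this last step to be the genuinely delicate one: one must handle the $(d-1)$-th-root-of-unity ambiguity of the B\"ottcher coordinate under complex conjugation carefully enough to pass from the knowledge that $|\beta|$ is algebraic to the conclusion that $\beta$ itself is algebraic, which is precisely what makes Theorem~\ref{thm:main} applicable; the rest --- the local decomposition, the normalizations, the rationality of the non-archimedean local heights, the uniform choice of $M$ --- is routine bookkeeping.
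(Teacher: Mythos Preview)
Your proof is correct and follows essentially the same strategy as the paper: split each $\hat{h}_{f_i}(a_i)$ into its archimedean and non-archimedean parts, use the primes $p_i$ to see that the non-archimedean contributions are multiplicatively independent, and apply Theorem~\ref{thm:main} to the archimedean part. The paper sidesteps your ``delicate'' detour through $\overline{\beta}=\zeta\beta$ by simply \emph{choosing} the B\"ottcher coordinates of $\sigma(f_i)$ and $\bar{\sigma}(f_i)$ to be complex conjugate Laurent series from the outset (as in the proof of Corollary~\ref{cor:algebraicity of hatH}), so that the product of B\"ottcher values is automatically real and equal on the nose to the known algebraic quantity $\prod_i H_{i,0}^{-c_i}$, making Theorem~\ref{thm:main} apply immediately.
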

	
	\begin{example}
	Consider the example $f_1(z)=\ldots=f_r(z)=f(z)=\displaystyle z^2+\frac{1}{2}$ earlier and consider $\hat{h}_{f}(1/p)$ for odd prime numbers $p$.
	Corollary~\ref{cor:irrational hhat} implies that at most one such value is rational, at most two such values belong to any given quadratic field such as $\Q(\sqrt{2})$, at most three of them belong to any given cubic field, etc. One could have similar examples for an arbitrary $f(z)\in\Qbar[z]$ and this yields an answer to a more general version of \cite[Question~6.4]{Ngu15_AI}.
	\end{example}

   We now explain in detail the method in the proof of Theorem~\ref{thm:main}. We can slightly rephrase this theorem as follows: a number of the form $\displaystyle\phi_{f_1}(a_1)^{n_1}\cdots\phi_{f_r}(a_r)^{n_r}$ is either a root of unity or transcendental.
   Generally speaking, there are two common methods to prove that a given number is transcendental. The first one is to use the Schmidt's Subspace Theorem: recent examples include the paper \cite{Ngu21_TS} resolving the transcendence counterpart of an open problem by Erd\"os-Graham and the paper \cite{BDJ20_AT}  by Bell-Diller-Jonsson giving first examples of dominant rational self-maps having transcendental dynamical degrees as well as their recent work with Krieger \cite{BDJK21_BM} on birational self-maps. Roughly speaking, in the above applications of the Subspace Theorem, the given number has a special form so that it has a strong diophantine approximation property. Unfortunately, this does not seem to be the case for numbers of the form $\displaystyle\phi_{f_1}(a_1)^{n_1}\cdots\phi_{f_r}(a_r)^{n_r}$.
	
	The other method is to use the construction of certain auxiliary polynomials vanishing at certain points. This is an old and extremely useful method in transcendental number theory and diophantine approximation. In recent years, such a construction is also capable of proving several surprising results in combinatorics \cite{CLP17_PF,EG17_OL,BN21_AA} and coined the Polynomial Method \cite{Gut16_PM}. Perhaps the most relevant idea to our current work dates back to a series of papers by Mahler in the late 1920s \cite{Mah29_AE,Mah30_AE,Mah30_UV} concerning the transcendence and algebraic independence of
	values of functions satisfying certain functional equations. The ideas in Mahler's papers have been developed further since the 1970s and the readers are referred to Nishioka's notes \cite{Nis96_MF} for a survery of results up to the mid 1990s. Unlike these modern results in which Siegel's lemma is needed in order to control the size of the coefficients of the auxiliary polynomials, our construction relies on the much simpler dimension counting arguments as in \cite{CLP17_PF,EG17_OL,BN21_AA}. In return for this simplicity, the powerful Medvedev-Scanlon classification \cite{MS14_IV} is needed in the proof of Theorem~\ref{thm:main}. 
	
	Let $m\in\N$ and let $f_1,\ldots,f_m\in\Qbar[z]$. We use $(f_1,\ldots,f_m)$ to denote the split polynomial map $(\bP^1)^m\rightarrow (\bP^1)^m$ given by
	$$(x_1,\ldots,x_m)\mapsto (f_1(x_1),\ldots,f_m(x_m)).$$
	We will use three features of the Medvedev-Scanlon classification. The first one states that if $f_1,\ldots,f_m$ are non-integrable polynomials while $g_1,\ldots,g_n$ are integrable then every preperiodic subvariety of $(\bP^1)^{m+n}$ with respect to the self-map
$(f_1,\ldots,f_m,g_1,\ldots,g_n)$  is of the form $V_1\times V_2$ where $V_1\subseteq (\bP^1)^m$ is preperiodic with respect to $(f_1,\ldots,f_m)$ and $V_2\subseteq (\bP^1)^n$ is preperiodic with respect to $(g_1,\ldots,g_n)$. The second one states that if $m\geq 2$, $V$ is a preperiodic subvariety
with respect to $(f_1,\ldots,f_m)$, and $\dim(V)<m$ 
then there exist $1\leq i\neq j\leq m$ and an $(f_i,f_j)$-preperiodic curve $C$
in $(\bP^1)^2$ such that $V\subseteq \pi_{ij}^{-1}(C)$ where $\pi_{ij}:\ (\bP^1)^n\rightarrow(\bP^1)^2$
is the projection to the $i$-th and $j$-th coordinate factors. These two features are given in \cite[Theorem~2.30]{MS14_IV}. In fact, the statement of their result is much stronger: $V$ is an irreducible component of the intersection of varieties of the form
$\pi_{ij}^{-1}(C)$. Finally, the third one states that when the projection from $C$ to each of the coordinate factors $\bP^1$ is non-constant then $C$ is the genus $0$ curve parametrized by a pair of polynomials satisfying a certain semiconjugacy functional equation \cite[Proposition~2.34]{MS14_IV}.
These three features constitute the ``coarse structure'' of preperiodic subvarieties \cite[Section~2]{MS14_IV} and some of its aspects have appeared in earlier work of Chatzidakis-Hrushoski-Peterzil \cite{CHP02_MT} and Medvedev \cite{Med07_MS}. The ``finer structure'' \cite[Sections~3--6]{MS14_IV} (also see \cite{Ngu15_SA,Pak17_PS}) boils down to the description of invariant curves of $(\bP^1)^2$ 
under self-maps of the form $(f,f)$ and involves mostly elementary yet highly technical arguments in the theory of polynomial decomposition; this is not needed in the proof of Theorem~\ref{thm:main}.

{\bf Acknowledgments.} We are grateful to Professors Jason Bell, Dragos Ghioca, Thomas Scanlon, and Joseph Silverman for several helpful comments. The author is partially supported by an NSERC Discovery Grant and a CRC Research Stipend.

\section{Notations and preliminary results for polynomial dynamics}
\subsection{Canonical heights} Define $\log^+(x)=\log\max\{\vert x\vert,1\}$ for every $x$. Throughout this subsection, let $K$ be a number field, let $f(z)\in K[z]$ with $d:=\deg(f)\geq 2$, and 
let $a\in K$. Let $M_K=M_K^\infty\cup M_K^0$ where
  	$M_K^\infty$ is the set of archimedean places and 
  	$M_K^0$ is the set of finite places of $K$. For each $v\in M_K$, we normalize the absolute value $\vert\cdot\vert_v$ on $K$
  	to be the unique extension of the usual $\vert\cdot\vert_p$ on $\Q$ where $p$ is the restriction of $v$ to $\Q$ and let $n_v=[K_v:\Q_p]$, this normalization follows \cite[Chapter~3]{Sil07_TA} and differs
  	from \cite[p.~11]{BG06_HI}. We define the Green function and canonical height functions:
  	$$g_{f,v}(a)=\lim_{n\to\infty}\frac{1}{d^n}\log^+ \vert f^n(a)\vert_v,$$
    $$\hat{h}_{f}(a)=\frac{1}{[K:\Q]}\sum_{v\in M_K} n_v g_{f,v}(a),\ \text{and}$$
    $$\hat{H}_f(a)=\exp(\hat{h}_f(a)).$$
    It is well-known that the limit in the definition of $g_{f,v}$ exists, the sum in the definition of $\hat{h}_f(a)$ is a finite sum (i.e. $g_{f,v}(a)=0$ for all but finitely many $v\in M_K$), and $\hat{h}_f(a)$ is independent of the choice of $K$, see \cite[Chapter~5]{Sil07_TA}. We have:
    \begin{lemma}\label{lem:gfv=clogp}
    Suppose $v\in M_K^{0}$ restricts to $p\in M_{\Q}^0$. Then $g_{f,v}(a)=c\log p$ with $c\in\Q$.
    \end{lemma}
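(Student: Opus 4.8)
The plan is to show that the non-archimedean Green function $g_{f,v}(a)$ is a rational multiple of $\log p$ by a direct analysis of the growth of $|f^n(a)|_v$ under iteration. First I would recall that for a non-archimedean absolute value, polynomial iteration has very rigid behaviour ``near infinity'': if $f(z) = c_d z^d + \cdots + c_0 \in K[z]$, then there is an explicit radius $R_v$ (depending only on $v$ and the coefficients of $f$) such that whenever $|x|_v > R_v$ one has $|f(x)|_v = |c_d|_v \, |x|_v^d$, by the ultrametric inequality (the leading term strictly dominates). Consequently, on the ``escaping'' region the dynamics of $\log|x|_v$ is governed exactly by the affine map $t \mapsto d t + \log|c_d|_v$. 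If $a$ is not in the basin of infinity at $v$, i.e.\ $\sup_n |f^n(a)|_v < \infty$, then $g_{f,v}(a) = 0$ and there is nothing to prove (this is the rational number $0$). So I would assume $|f^n(a)|_v \to \infty$.

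In the escaping case, there is a least index $n_0$ with $|f^{n_0}(a)|_v > R_v$; write $b = f^{n_0}(a)$, so $|f^{m}(b)|_v = |c_d|_v^{(d^m - 1)/(d-1)} \, |b|_v^{d^m}$ for all $m \ge 0$ by an immediate induction using the dominant-term formula. Taking logarithms,
\[
\log^+ |f^{n_0 + m}(a)|_v = d^m \log|b|_v + \frac{d^m - 1}{d-1}\log|c_d|_v,
\]
for $m$ large enough that the quantity is positive (which it eventually is, since $|f^n(a)|_v \to \infty$). Dividing by $d^{n_0 + m}$ and letting $m \to \infty$ gives
\[
g_{f,v}(a) = \frac{1}{d^{n_0}}\left( \log|b|_v + \frac{\log|c_d|_v}{d-1} \right).
\]
Now $b = f^{n_0}(a)$ and $c_d$ are both nonzero elements of the number field $K$, and $v$ lies over the rational prime $p$; hence $|b|_v$ and $|c_d|_v$ are of the form $p^{k}$ for rational numbers $k$ (the valuation $\ord_v$ takes values in $\frac{1}{e}\Z$ where $e$ is the ramification index, so $\log|b|_v = -\ord_v(b)\,\frac{e_0}{e}\log p \in \Q\log p$, and similarly for $c_d$). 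Therefore $g_{f,v}(a)$ is a $\Q$-linear combination of $\log p$, i.e.\ $g_{f,v}(a) = c \log p$ with $c \in \Q$, as claimed.

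I do not expect any serious obstacle here; the only points requiring a little care are (a) verifying that the escaping orbit does enter the region $|x|_v > R_v$ and stays there (immediate once it enters, since the dominant-term formula shows $|f(x)|_v \ge |x|_v$ there once $|x|_v$ is large), and (b) confirming that $\log^+$ rather than $\log$ does not spoil the limit — but since $|f^n(a)|_v \to \infty$ the two agree for all large $n$, which is all that matters for the $d^{-n}$-weighted limit. The underlying structural fact, that non-archimedean filled Julia sets are determined by a simple escape criterion, is exactly what makes this lemma elementary, in contrast to the archimedean difficulties emphasized in the introduction.
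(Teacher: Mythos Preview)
Your proof is correct and follows essentially the same route as the paper's: both split into the bounded-orbit case (where $g_{f,v}(a)=0$) and the escaping case, then use the ultrametric dominant-term identity $|f(x)|_v=|c_d|_v\,|x|_v^{d}$ for $|x|_v$ large to solve the resulting affine recurrence for $\log|f^n(a)|_v$ and read off $g_{f,v}(a)$ as a rational multiple of $\log p$. The only difference is cosmetic: the paper tracks the exponents $c_n$ with $|f^n(a)|_v=p^{c_n}$ and solves the recurrence $c_{n+1}=dc_n+c'$, whereas you sum the geometric series directly to get the closed form.
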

    \begin{proof}
  	This is well-known, we include the proof here for the convenience of the readers. We can write
  	$\displaystyle \vert f^n(a)\vert_v=p^{c_n}$ with $c_n\in\Q$. If the $c_n$'s are bounded from above
  	then $g_{f,v}(a)=0$. Otherwise, let $n_0$ be such that $\vert f^{n_0}(a)\vert_v=p^{c_{n_0}}$ is sufficiently large. Let $\ell$ be the leading coefficient of $f$ and let $c'\in\Q$ be such that
  	$\vert\ell\vert_v=p^{c'}$, then $c_{n+1}=dc_n+c'$ for every $n\geq n_0$. Put $c''=c'/(d-1)$, then the above recurrence relation gives:
  	$$c_n=d^{n-n_0}(c_{n_0}+c'')-c''\ \text{for $n\geq n_0$.}$$
  	Therefore $c=\displaystyle\lim_{n\to\infty}\frac{c_n}{d^n}=\frac{c_{n_0}+c''}{d^{n_0}}\in\Q.$
    \end{proof}
    
\subsection{B\"ottcher coordinates}
The theory of B\"ottcher coordinates is an important tool to study polynomial dynamics. Early contributors are B\"ottcher \cite{Bot04_TP} and Ritt \cite{Rit20_OT} and we refer the readers to \cite[Chapter~9]{Mil06_DI}
for classical results over $\C$. There have been several modern treatments recently \cite{Ing13_AG,DGKNTY19_BH,FG22_TA} and we follow  \cite[Section~2.4]{FG22_TA} here.

Let $K$ be a field of characteristic $0$ and let $f(z)\in K[z]$ with $d:=\deg(f)\geq 2$. A B\"ottcher coordinate of $f$ is a Laurent series:
$$\phi_f(z)=a_1z+a_0+\frac{a_{-1}}{z}+\frac{a_{-2}}{z^2}+\ldots\in \bar{K}[[1/z]]$$
with $a_1\neq 0$ such that $\phi_f(f(z))=\phi_f(z)^d$. We have the following:
\begin{proposition}\label{prop:Bottcher coordinate exists}
Suppose $K$ contains a $(d-1)$-th root of the leading coefficient $\ell$ of $f$. Then there exists
$$\phi(z)=a_1z+a_0+\ldots\in zK[[1/z]]$$
that is a B\"ottcher coordinate of $f$. Moreover, we have:
\begin{itemize}
	\item [(a)] $a_1^{d-1}=\ell$.
	\item [(b)] $\psi(z)\in \bar{K}((1/z))$ is a B\"ottcher coordinate of $f$ if and only if $\psi(z)=\zeta \phi(z)$ where $\zeta$ is a $(d-1)$-th root of unity.
\end{itemize}
\end{proposition}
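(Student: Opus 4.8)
The plan is the classical term-by-term construction, carried out inside the field $\bar{K}((1/z))$ of formal Laurent series in $1/z$. Write the unknown as $\phi(z)=\sum_{j\le 1}a_jz^j$. Since $f$ has positive degree, $f(z)^{-1}$ equals $z^{-d}$ times a unit of $K[[1/z]]$, so after substituting $w=f(z)$ the tail $\sum_{j\ge1}a_{-j}f(z)^{-j}$ converges $(1/z)$-adically, and both $\phi(f(z))$ and $\phi(z)^d$ are genuine elements of $\bar{K}((1/z))$, each of top $z$-degree $d$ (with leading coefficients $a_1\ell$ and $a_1^d$ respectively). Matching the coefficients of $z^d$ in the equation $\phi(f(z))=\phi(z)^d$ therefore forces $a_1^{d-1}=\ell$, so we fix once and for all a $(d-1)$-th root $a_1\in K$ of $\ell$ — which exists by hypothesis — and this already proves (a).

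Next, for each $k\ge1$ I would compare the coefficients of $z^{d-k}$ on the two sides. The key point is that this comparison has the shape
$$L_k(a_1,a_0,\dots,a_{2-k})=d\,a_1^{d-1}\,a_{1-k}+R_k(a_1,a_0,\dots,a_{2-k}),$$
where $L_k,R_k$ are polynomials with coefficients in $K$ involving only $a_1,a_0,\dots,a_{2-k}$. On the left, $\phi(f(z))=a_1f(z)+a_0+\sum_{j\ge1}a_{-j}f(z)^{-j}$, and $a_{-j}f(z)^{-j}$ contributes to $z^{d-k}$ only when $d-k\le-dj$, i.e. $k\ge d(j+1)$, which — because $d\ge2$ — forces $-j\ge2-k$; hence the left side is a $K$-linear expression in the already-determined $a_1,a_0,\dots,a_{2-k}$. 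On the right, writing $\phi(z)=a_1z(1+v(z))$ with $v(z)=\sum_{i\ge1}(a_{1-i}/a_1)z^{-i}$, the coefficient of $z^{d-k}$ in $\phi(z)^d=a_1^dz^d(1+v)^d$ is $a_1^d$ times the coefficient of $z^{-k}$ in $(1+v)^d$; the linear term $dv$ contributes $d\,a_1^{d-1}a_{1-k}$, while every monomial produced by $v^2,\dots,v^d$ of total degree $-k$ uses only the $a_{1-i}$ with $i\le k-1$. Since $\car K=0$ and $\ell\ne0$, the coefficient $d\,a_1^{d-1}=d\ell$ is a nonzero element of $K$, so the displayed identity determines $a_{1-k}\in K$ uniquely from $a_1,a_0,\dots,a_{2-k}$. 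Induction on $k$ then produces $\phi(z)=a_1z+a_0+a_{-1}/z+\cdots\in zK[[1/z]]$ with $\phi(f(z))=\phi(z)^d$.

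For part (b), let $\psi(z)=b_1z+b_0+\cdots\in\bar{K}((1/z))$ be an arbitrary B\"ottcher coordinate of $f$. Since $a_1\ne0$, the quotient $h(z):=\psi(z)/\phi(z)$ is a unit of $\bar{K}[[1/z]]$ whose value at $\infty$ is $\lambda:=b_1/a_1\ne0$, and dividing $\psi(f(z))=\psi(z)^d$ by $\phi(f(z))=\phi(z)^d$ gives $h(f(z))=h(z)^d$. Comparing constant terms yields $\lambda=\lambda^d$, so $\lambda^{d-1}=1$. If $h\ne\lambda$, let $m\ge1$ be minimal with the $z^{-m}$-coefficient $h_m$ of $h$ nonzero; then the $z^{-m}$-coefficient of $h(z)^d$ equals $d\lambda^{d-1}h_m=dh_m\ne0$, whereas $h(f(z))=\lambda+h_mf(z)^{-m}+\cdots$ has all its non-constant terms in $z$-degree $\le-dm<-m$, so its $z^{-m}$-coefficient is $0$ — a contradiction. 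Hence $h=\lambda$ is a $(d-1)$-th root of unity and $\psi=\lambda\phi$. Conversely, if $\zeta^{d-1}=1$ then $\zeta\phi$ has the required form (its leading coefficient is $\zeta a_1\ne0$) and $(\zeta\phi)(f(z))=\zeta\,\phi(z)^d=\zeta^d\phi(z)^d=(\zeta\phi(z))^d$, so $\zeta\phi$ is again a B\"ottcher coordinate; this establishes (b).

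The only step carrying genuine content is the bookkeeping behind the displayed recursion — verifying that the newest coefficient $a_{1-k}$ shows up on the right with the nonzero multiplier $d\ell$ and does not appear on the left at all — and this is precisely where characteristic $0$ is used, so that $d$ is invertible. The hypothesis that $\ell$ admits a $(d-1)$-th root in $K$ is exactly what keeps the entire construction inside $K$; everything else is routine manipulation of formal Laurent series.
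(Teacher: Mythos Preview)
Your argument is correct. The paper does not actually prove this proposition; it simply cites \cite[Section~2.4]{FG22_TA}. What you have written is the classical term-by-term construction, and the details you give are sound: the key observation that on the right-hand side $a_{1-k}$ appears with the nonzero multiplier $d\,a_1^{d-1}=d\ell$, while on the left-hand side it does not appear at all (because $a_{-j}f(z)^{-j}$ only touches $z$-degrees $\le -dj$), is exactly what makes the recursion solvable, and your use of characteristic~$0$ is isolated precisely at this step. The uniqueness argument in part~(b), via the auxiliary series $h=\psi/\phi$ satisfying $h(f(z))=h(z)^d$ and the minimal-degree contradiction, is clean and complete. There is nothing to correct.
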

\begin{proof}
See \cite[Section~2.4]{FG22_TA}.
\end{proof}

For the rest of this subsection, we assume that $K$ is a number field containing a $(d-1)$-th root of the leading coefficient of $f$. Let $\phi(z)\in zK[[1/z]]$ be a B\"ottcher coordinate of $f$. 
\begin{proposition}\label{prop:properties of Bottcher coord}
The following hold:
\begin{itemize}
	\item [(a)] For each $v\in M_K$, there exists a positive number $B_v>0$ such that for every $z\in K_v$ with $\vert z\vert_v>B_v$, we have $\lim \vert f^n(z)\vert_v=\infty$ and $\phi(z)$ is convergent. Suppose $\vert z\vert_v>B_v$ and $\vert f(z)\vert_v>B_v$ then $\phi(f(z))=\phi(z)^d$.
	
	\item [(b)] Let $v\in M_K$ and suppose $\vert a\vert_v>B_v$, then $g_{f,v}(a)=\log \vert \phi(a)\vert_v$.
	
	\item [(c)] Let $\sigma$ be an embedding of $K$ into $\C$ and let $v\in M_K^{\infty}$ be given by
	$\vert z\vert_v=\vert\sigma(z)\vert$ for every $z\in K$. Then $\sigma(\phi)$ is a B\"ottcher coordinate of $\sigma(f)$. Moreover, if  $\vert a\vert_v>B_v$ then 
	$\sigma(\phi)$ is convergent at $\sigma(a)$ and 
	$g_{f,v}(a)=\log\vert \sigma(\phi)(\sigma(a))\vert$.
\end{itemize}
\end{proposition}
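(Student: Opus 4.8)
The plan is to treat the three parts in sequence, with part~(a) --- the existence of a neighbourhood of $\infty$ on which $\phi$ converges and satisfies $\phi(f(z))=\phi(z)^d$ pointwise --- carrying essentially all of the content; parts~(b) and~(c) should then follow as short formal consequences of~(a) together with the definition of $g_{f,v}$. The core of~(a) is the classical theory of B\"ottcher coordinates over a complete valued field, for which I would lean on \cite[Section~2.4]{FG22_TA} (see also \cite{Ing13_AG,DGKNTY19_BH}), recording just enough of the argument to identify what $B_v$ is.

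For~(a), fix $v\in M_K$ and write $f(z)=\ell z^d+c_{d-1}z^{d-1}+\cdots+c_0$. First I would choose $B_v$ large enough that the leading monomial dominates on $\{\vert z\vert_v>B_v\}$: in the non-archimedean case this forces the exact identity $\vert f(z)\vert_v=\vert\ell\vert_v\vert z\vert_v^d$, and in the archimedean case a two-sided estimate $\vert f(z)\vert_v\asymp\vert\ell\vert_v\vert z\vert_v^d$. Enlarging $B_v$ so that $\vert\ell\vert_v B_v^{d-1}>4$ makes $D_v:=\{\vert z\vert_v>B_v\}$ forward invariant, with $\vert f^n(z)\vert_v\to\infty$ --- in fact like $(\text{const})^{d^n}$ --- for every $z\in D_v$. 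Next I would establish convergence of the formal series $\phi$ on $D_v$ (after a possible further enlargement of $B_v$) from a geometric coefficient bound $\vert a_{-k}\vert_v\le C_v^{\,k+1}$ produced by the recursion that $\phi(f(z))=\phi(z)^d$ imposes on the $a_{-k}$; alternatively, from the telescoping product $\phi(z)=a_1z\prod_{n\ge0}\bigl(1+u(f^n(z))\bigr)^{1/d^{n+1}}$, where $u(w)=f(w)/(\ell w^d)-1\in(1/w)K[[1/w]]$ is small on $D_v$ (this uses the chosen root $a_1$ of $\ell$). The pointwise identity $\phi(f(z))=\phi(z)^d$ on the region where both $\vert z\vert_v>B_v$ and $\vert f(z)\vert_v>B_v$ then holds because the two convergent functions there agree as formal Laurent series. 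The hard part will be this convergence in the non-archimedean case when $v\mid d$: the only denominators entering $\phi$ are bounded powers of $d$, $\ell$ and $a_1$ (equivalently, one must select branches of the $d^{n+1}$-th roots in the product), and one has to check that the $(\text{const})^{d^n}$ growth of $\vert f^n(z)\vert_v$ outpaces the at-most-exponential growth of $\vert d^{n+1}\vert_v^{-1}$ after $B_v$ is enlarged; making this uniform in flavour across all places is the one genuine subtlety, everything else being bookkeeping.

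For~(b), assuming $\vert a\vert_v>B_v$, part~(a) places the whole forward orbit of $a$ in $D_v$, so $\phi(f^n(a))=\phi(a)^{d^n}$ for all $n$. Enlarging $B_v$ so that the leading term of $\phi$ also dominates on $D_v$ gives $\vert\phi(w)\vert_v=\vert a_1\vert_v\vert w\vert_v$ (exactly, if $v$ is non-archimedean; up to a factor $1+o(1)$ as $\vert w\vert_v\to\infty$, if archimedean), hence $\vert\phi(a)\vert_v^{d^n}=(1+o(1))\vert a_1\vert_v\vert f^n(a)\vert_v$. Taking logarithms, dividing by $d^n$, and letting $n\to\infty$ then yields $g_{f,v}(a)=\lim_n d^{-n}\log^+\vert f^n(a)\vert_v=\log\vert\phi(a)\vert_v$; here $\log^+=\log$ for $n$ large because the orbit escapes, and the same computation forces $\vert\phi(a)\vert_v>1$, consistent with $g_{f,v}(a)\ge0$.

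For~(c), I would apply $\sigma$ coefficientwise to $\phi$: since $\sigma$ is a ring homomorphism it turns the formal identity $\phi(f(z))=\phi(z)^d$ into $\sigma(\phi)(\sigma(f)(z))=\sigma(\phi)(z)^d$, and as $\sigma(f)$ has degree $d$ with leading coefficient $\sigma(\ell)$ and $\sigma(\phi)$ has leading coefficient $\sigma(a_1)\ne0$, this exhibits $\sigma(\phi)$ as a B\"ottcher coordinate of $\sigma(f)\in\C[z]$. Finally, the completion $K_v$ embeds in $\C$ extending $\sigma$, so by continuity the convergent value $\phi(a)\in K_v$ (legitimate since $\vert a\vert_v=\vert\sigma(a)\vert>B_v$) is sent to $\sigma(\phi)(\sigma(a))$, with the series for $\sigma(\phi)$ at $\sigma(a)$ converging because $\vert\sigma(a)\vert>B_v$; therefore $\vert\phi(a)\vert_v=\vert\sigma(\phi)(\sigma(a))\vert$, and combining this with~(b) gives $g_{f,v}(a)=\log\vert\sigma(\phi)(\sigma(a))\vert$, as claimed.
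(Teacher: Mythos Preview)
Your proposal is correct and follows essentially the same approach as the paper: part~(a) is treated via the classical theory of B\"ottcher coordinates (the paper simply cites \cite[Section~2.4]{FG22_TA}, while you spell out the telescoping product and the $v\mid d$ subtlety), and your arguments for~(b) and~(c) match the paper's almost line for line --- using $\phi(f^n(a))=\phi(a)^{d^n}$ together with $\vert\phi(w)\vert_v\sim\vert a_1\vert_v\vert w\vert_v$ for~(b), and applying $\sigma$ to the formal functional equation plus the isometric embedding $K_v\hookrightarrow\C$ for~(c).
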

\begin{proof}
	Part (a) follows from \cite[Section~2.4]{FG22_TA}. Part (b) is well-known but it is only stated in \cite[Proposition~2.13]{FG22_TA} when $f$ has a special form, so we include the short proof here for the sake of completeness. Replacing $a$ by some $f^k(a)$ if necessary, we may assume that
	$\vert f^n(a)\vert_v>B_v$ for $n\geq 0$. Write $\phi(z)=a_1z+\ldots$, then as $n\to\infty$ we have:
	$$\vert\phi(a)\vert_v^{d^n}=\vert \phi(f^n(a))\vert_v= \vert a_1\vert_v\cdot\vert f^n(a)\vert_v+O(1).$$
Taking logarithm then dividing by $d^n$ both sides and let $n\to\infty$, we get the desired result. For part (c), we have that $\sigma(\phi)$ satisfies the functional equation in the definition of a B\"ottcher coordinate for $\sigma(f)$:
$$\sigma(\phi)(\sigma(f))=\sigma(\phi(f))=\sigma(\phi^d)=\sigma(\phi)^d,$$
hence $\sigma(\phi)$ is a B\"ottcher coordinate of $\sigma(f)$.
Finally, $\sigma$ is an isomorphism from $(K,\vert\cdot\vert_v)$ and $(\sigma(K),\vert\cdot\vert)$. Hence it can be extended to an isomorphism between $(K_v,\vert\cdot\vert_v)$ and 
$(\widehat{\sigma(K)},\vert\cdot\vert)$ where $\widehat{\sigma(K)}=\R$ or $\C$ is the completion
of $\sigma(K)$ with respect to $\vert\cdot\vert$. Then we have:
$$g_{f,v}(a)=\log \vert \phi(a)\vert_v=\log\vert \sigma(\phi)(\sigma(a))\vert.$$
\end{proof}

\subsection{The Medvedev-Scanlon classification}
\begin{definition}
A curve in $\bP^1\times\bP^1$ is called non-fibered if its projection to each of the coordinates $\bP^1$ is non-constant.
\end{definition}

We have the following properties of the coarse 
structure of preperiodic subvarieties of a split polynomial map given in \cite[Section~2]{MS14_IV}:
\begin{proposition}\label{prop:MS}
Let $f_1,\ldots,f_m$ be non-integrable polynomials of degree $d\geq 2$ and let $g_1,\ldots,g_n$ be integrable polynomials of degree $d$. We have:
\begin{itemize}
	\item [(a)] Every subvariety of $(\bP^1)^{m+n}$ that is preperiodic under $(f_1,\ldots,f_m,g_1,\ldots,g_n)$ has the form $V\times W$ where $V$ is a subvariety of $(\bP^1)^m$ that is preperiodic under
	$(f_1,\ldots,f_m)$ and $W$ is a subvariety of $(\bP^1)^n$ that is preperiodic under $(g_1,\ldots,g_n)$.
	
	\item [(b)] Let $V$ be a subvariety of $(\bP^1)^m$ that is preperiodic under $(f_1,\ldots,f_m)$. Suppose $m\geq 2$ and $\dim(V)<m$ then there exist an $(f_i,f_j)$-preperiodic curve $C$ in $(\bP^1)^2$
	such that $V\subseteq \pi_{ij}^{-1}(C)$ where $\pi_{ij}:(\bP^1)^m\rightarrow (\bP^1)^2$
	is the projection onto the $i$-th and $j$-th coordinate factors.
	
	\item [(c)] Let $f$ and $g$ be non-integrable polynomials of degree $d\geq 2$ and let $C$ be a non-fibered curve that is invariant under $(f,g)$. Then there exist non-constant polynomials $A,B,Q$ such that 
	$C=\{(A(t),B(t)):\ t\in\bP^1(\Qbar)\}$, $f\circ A=A\circ Q$, and $g\circ B=B\circ Q$.
\end{itemize}
\end{proposition}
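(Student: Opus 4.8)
These are the ``coarse structure'' results of Medvedev--Scanlon: (a) and (b) are contained in \cite[Theorem~2.30]{MS14_IV} and (c) is \cite[Proposition~2.34]{MS14_IV}. We indicate the shape of the argument and where the real difficulty lies. For (a) and (b) the plan is to reduce to an irreducible \emph{invariant} subvariety --- pass to an iterate of the split map to turn ``preperiodic'' into ``periodic'' and then into ``invariant'', using that an iterate of a non-integrable polynomial is non-integrable (its Julia set is unchanged, so it is a circle or a segment only if the original one is) and an iterate of an integrable polynomial is integrable, and that the conclusions pass back to the original subvariety --- and then to invoke the fundamental ``binarity'' of a disintegrated split polynomial map: an irreducible invariant $V\subseteq(\bP^1)^N$ is an irreducible component of $\bigcap_{i<j}\pi_{ij}^{-1}(\overline{\pi_{ij}(V)})$, so that $V$ is recovered from its projections to pairs of coordinate factors. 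Granting this, (b) is immediate: if $\dim(V)<m$, not every pairwise image can be all of $(\bP^1)^2$ (else $V=(\bP^1)^m$), so some $\overline{\pi_{ij}(V)}$ is a proper $(f_i,f_j)$-invariant subvariety, hence a point or a curve; a curve is the desired $C$, and if it is a point then $x_i,x_j$ are constant on $V$ with values fixed by $f_i,f_j$, so $V$ lies in $\pi_{ij}^{-1}$ of a preperiodic coordinate line. For (a) one needs in addition that there is \emph{no} non-fibered curve in $\bP^1\times\bP^1$ invariant under a pair $(f_i,g_j)$ with $f_i$ non-integrable and $g_j$ integrable; combined with binarity this forces every irreducible invariant subvariety of $(\bP^1)^{m+n}$ to split as $V_1\times W_1$. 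The non-existence of such a mixed curve I would deduce from the parametrization in (c): it would produce non-constant polynomials $A,B,Q$ with $f_i\circ A=A\circ Q$ and $g_j\circ B=B\circ Q$, and the second relation with $g_j$ integrable forces $Q$ integrable, whence the first forces $f_i$ integrable, contradicting $f_i\in\cD_d$.

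The technical heart is (c). Let $C\subseteq\bP^1\times\bP^1$ be a non-fibered irreducible $(f,g)$-invariant curve with $f,g\in\cD_d$, let $\nu\colon\widetilde C\to C$ be its normalization, and set $a=\pi_1\circ\nu$ and $b=\pi_2\circ\nu$, which are finite surjective since $C$ is non-fibered. Invariance lifts to a finite self-map $q\colon\widetilde C\to\widetilde C$ with $a\circ q=f\circ a$ and $b\circ q=g\circ b$, and comparing degrees in the first relation gives $\deg(q)=d$. First I would apply Riemann--Hurwitz to $q$: with $\gamma$ the genus of $\widetilde C$ and $R\ge 0$ its ramification divisor, $2\gamma-2=d(2\gamma-2)+\deg(R)$, so $(d-1)(2\gamma-2)\le 0$ and $\gamma\le 1$. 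Genus $1$ is excluded because $T:=a^{-1}(\infty)$ is nonempty and satisfies $q^{-1}(T)=T$ --- indeed $x\in q^{-1}(T)$ iff $f(a(x))=\infty$ iff $a(x)\in f^{-1}(\infty)=\{\infty\}$ --- so an \'etale $q$ of degree $d$ would restrict to a $d$-to-one surjection of the finite set $T$ onto itself, which is impossible. Hence $\widetilde C\cong\bP^1$. Summing $e_q(x)-1$ over $T$ then gives $(d-1)\,|T|=\sum_{x\in T}(e_q(x)-1)\le\deg(R)=2d-2$, so $|T|\le 2$, with $|T|=2$ forcing $q$ to be totally ramified over each point of $T$, i.e.\ conjugate to a monomial map $z\mapsto z^{\pm d}$; but then the semiconjugacy $f\circ a=a\circ q$ (after passing to $q^2$ if $q$ swaps the two points) would make $f$ integrable, a contradiction. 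Therefore $|T|=1$; taking the coordinate on $\widetilde C\cong\bP^1$ with $T=\{\infty\}$ makes $q$ a polynomial $Q$ of degree $d$ and $a$ a polynomial $A$. The same analysis applied to $b^{-1}(\infty)$ --- completely invariant under the polynomial $Q$, hence a single point, which must be $\infty$ since otherwise $Q$, and then $f$, would be integrable --- makes $b$ a polynomial $B$. Then $f\circ A=A\circ Q$, $g\circ B=B\circ Q$, and $C=\{(A(t),B(t)):t\in\bP^1(\Qbar)\}$.

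The hard part is the binarity input behind (a) and (b): proving that an invariant subvariety of a split disintegrated polynomial map is, up to irreducible components, cut out by its projections to pairs of factors is the substantive content of \cite[Section~2]{MS14_IV} (it rests on the orthogonality of disintegrated polynomial dynamics to integrable dynamics, and ultimately on Ritt-type decomposition theory). Within (c) the one delicate step is the elementary but fiddly ramification bookkeeping over $\infty$, through the semiconjugacies $a\circ q=f\circ a$ and $b\circ q=g\circ b$, that pins down $q$, $a$, $b$ as polynomials and excludes the monomial exception. Since all of this is carried out in \cite{MS14_IV}, we do not reproduce it here.
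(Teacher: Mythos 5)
Your proof rests on exactly the same citations as the paper's: the paper's entire proof of this proposition is that (a) and (b) follow from \cite[Theorem~2.30]{MS14_IV} and (c) from \cite[Proposition~2.34]{MS14_IV}, which is precisely what you invoke. The additional sketch you supply of the Medvedev--Scanlon arguments (binarity for (a)--(b), the Riemann--Hurwitz bookkeeping over $\infty$ for (c)) appears sound but goes beyond what the paper records, so there is nothing to compare further.
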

\begin{proof}
Parts (a) and (b) follow from \cite[Theorem~2.30]{MS14_IV} while part (c) follows from \cite[Proposition~2.34]{MS14_IV}.
\end{proof}

Our next result connects the functional equation in part (c) of Proposition~\ref{prob:Silverman} to
a relationship between B\"ottcher coordinates:
\begin{proposition}\label{prop:curves to Bottcher coordinates}
Let $f(z)\in \C[z]$ be a polynomial of degree $d\geq 2$. Let $A(z),Q(z)\in\C[z]$ be non-constant such that $f\circ A=A\circ Q$. Let $\phi(z)$ and $\psi(z)$ be respectively a  B\"ottcher coordinate of $f$ and $Q$ and let $\delta=\deg(A)$. Then $\displaystyle\frac{\phi(A(z))}{\psi(z)^\delta}$ is a root of unity.
\end{proposition}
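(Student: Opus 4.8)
The plan is to read off the conclusion from the functional equations satisfied by the two B\"ottcher coordinates. First I would record that $\deg(Q)=d$: comparing degrees in $f\circ A=A\circ Q$ gives $d\cdot\delta=\delta\cdot\deg(Q)$. The central observation is that both Laurent series $\phi\circ A$ and $\psi^{\delta}$ satisfy the \emph{same} functional equation $g(Q(z))=g(z)^{d}$. Indeed, $\phi(A(Q(z)))=\phi(f(A(z)))=\phi(A(z))^{d}$ using $A\circ Q=f\circ A$ together with $\phi(f(z))=\phi(z)^{d}$; and $\psi(Q(z))^{\delta}=(\psi(z)^{d})^{\delta}=(\psi(z)^{\delta})^{d}$ using $\psi(Q(z))=\psi(z)^{d}$. (Here I would note that substituting a polynomial of positive degree into a Laurent series in $1/z$ is a continuous ring homomorphism, so it respects these identities.) Consequently the quotient $u(z):=\phi(A(z))/\psi(z)^{\delta}$ satisfies $u(Q(z))=u(z)^{d}$.

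Next I would pin down the shape of $u$. By Proposition~\ref{prop:Bottcher coordinate exists}, $\phi(z)=a_{1}z+\cdots$ with $a_{1}\neq 0$ and $\psi(z)=b_{1}z+\cdots$ with $b_{1}\neq 0$; hence, writing $\ell_{A}$ for the leading coefficient of $A$, both $\phi(A(z))$ and $\psi(z)^{\delta}$ lie in $z^{\delta}\C[[1/z]]$ with nonzero leading coefficient ($a_{1}\ell_{A}$ and $b_{1}^{\delta}$ respectively). Therefore $u$ is a unit in $\C[[1/z]]$: $u(z)=u_{0}+u_{1}/z+u_{2}/z^{2}+\cdots$ with $u_{0}=a_{1}\ell_{A}/b_{1}^{\delta}\neq 0$. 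Comparing constant terms in $u(Q(z))=u(z)^{d}$ — the constant term of $u(Q(z))$ is $u_{0}$ because $1/Q(z)$ has positive order at infinity — gives $u_{0}=u_{0}^{d}$, so $u_{0}^{d-1}=1$.

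It remains to show $u$ is constant, i.e.\ $u\equiv u_{0}$; then $u=u_{0}$ is a $(d-1)$-th root of unity and we are done. I would argue by the order of vanishing at infinity. Set $k:=\ord_{1/z}(u-u_{0})\in\N\cup\{\infty\}$ and suppose $k$ is finite. On the one hand, $u(z)^{d}-u_{0}=u(z)^{d}-u_{0}^{d}=d\,u_{0}^{d-1}(u(z)-u_{0})+(\text{higher order})$ has order exactly $k$, since $d\,u_{0}^{d-1}\neq 0$. On the other hand, $u(Q(z))-u_{0}=(u-u_{0})(Q(z))$ has order $kd$, because $1/Q(z)$ has order $d$ at infinity (as $\deg Q=d$). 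The functional equation forces $k=kd$, impossible for $k\geq 1$ and $d\geq 2$. Hence $k=\infty$ and $u\equiv u_{0}$.

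The proof is short, and essentially a rigidity statement: any two Laurent solutions of $g(Q(z))=g(z)^{d}$ with a pole of the same order at infinity differ by a constant root of unity, precisely because $\deg Q\geq 2$ makes composition with $Q$ strictly increase the order at infinity of any nonconstant correction term. The one point requiring a little care is the bookkeeping that substituting a polynomial of degree $e$ into a power series in $1/z$ multiplies orders at infinity by $e$; everything else is formal manipulation of Laurent series. One could replace the last paragraph by a direct termwise induction showing $u_{1}=u_{2}=\cdots=0$, but the order-of-vanishing formulation seems cleanest.
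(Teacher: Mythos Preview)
Your proof is correct and takes a somewhat different route from the paper's. The paper constructs a formal $\delta$-th root $\eta(z)$ of $\phi(A(z))$ via the binomial series, shows that after twisting by a suitable root of unity one obtains a series $\mu$ satisfying $\mu(Q(z))=\mu(z)^{d}$, and then invokes the uniqueness of B\"ottcher coordinates (Proposition~\ref{prop:Bottcher coordinate exists}(b)) to conclude that $\mu/\psi$ is a $(d-1)$-th root of unity; raising to the $\delta$-th power finishes. You instead pass immediately to the quotient $u=\phi(A(z))/\psi(z)^{\delta}$, observe that it is a unit in $\C[[1/z]]$ satisfying $u(Q(z))=u(z)^{d}$, and give a self-contained order-of-vanishing argument forcing $u$ to be constant. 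Your approach sidesteps the formal $\delta$-th root construction entirely and does not appeal to Proposition~\ref{prop:Bottcher coordinate exists}(b); in effect, your last paragraph reproves that uniqueness statement in a mild generalization (for units in $\C[[1/z]]$ rather than series with a simple pole at infinity). The paper's argument highlights that a $\delta$-th root of $\phi\circ A$ is itself a B\"ottcher coordinate of $Q$, which is conceptually pleasant; yours is more direct and arguably shorter.
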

\begin{proof}
First, we observe that if $\displaystyle S(z)\in\frac{1}{z}\C[[1/z]]$ and $m$ is a positive integer then
$$(1+S(z))^{1/m}=1+\frac{1}{m}S(z)+\frac{1}{2}\cdot\frac{1}{m}\cdot\left(\frac{1}{m}-1\right) S(z)^2+\ldots$$
gives a well-defined element of $\C[[1/z]]$ and the $m$-th power of this element is $1+S(z)$. Write
$$\phi(A(z))=\alpha z^{\delta}+ T(z)$$
with $\alpha\neq 0$ and $T(z)\in z^{\delta-1}\C[[1/z]]$. Choose a $\delta$-th root $\beta$ of $\alpha$, then we have:
$$\eta(z):=\beta z\left(1+\frac{T(z)}{z^{\delta}}\right)^{1/\delta}$$
satisfies $\eta(z)^{\delta}=\phi(A(z))$. Thanks to the given functional equation, we have:
$$\eta(Q(z))^{\delta}=\phi(A(Q(z)))=\phi(f(A(z)))=\phi(A(z))^{d}=\eta(z)^{d\delta}.$$
Therefore $\eta(Q(z))=\zeta \eta(z)^d$ for some $\delta$-th root of unity $\zeta$. Put $\mu(z)=\zeta_1\eta(z)$ where $\zeta_1^{d-1}=\zeta$ then we have:
$$\mu(Q(z))=\zeta_1\eta(Q(z))=\zeta_1\zeta \eta(z)^d=\zeta_1\zeta (\mu(z)/\zeta_1)^d=\mu(z)^d.$$
Since $\deg(Q)=\deg(f)=d$, the above functional equation implies that $\mu(z)$ is a B\"ottcher coordinate of $Q(z)$. Proposition~\ref{prop:Bottcher coordinate exists} gives that $\displaystyle\frac{\mu(z)}{\psi(z)}$ is a root of unity. Raising this to the $\delta$-th power, we get the desired result.
\end{proof}
	
	\section{Proof of Theorem~\ref{thm:main}}
	We prove this theorem by contradiction. Suppose there exist $\alpha$, $d$, $r$, the $f_i$'s, $a_i$'s, and $n_i$'s as in the statement of Theorem~\ref{thm:main} with
	\begin{equation}\label{eq:pr of thm alpha=...}
	\alpha=\phi_{f_1}(a_1)^{n_1}\cdots\phi_{f_r}(a_r)^{n_r}
	\end{equation}
	where $\alpha$ is \emph{not} a root of unity and $r$ is smallest possible.
	To simplify the notation, we write $\phi_i$ instead of $\phi_{f_i}$. Let $K$ be a number field such that $\alpha$, the $f_i$'s, $a_i$'s, and $\phi_i$'s are defined over $K$ and let $\cO_K$ be its ring of integers. Obviously, $r>0$ otherwise the RHS of \eqref{eq:pr of thm alpha=...} is $1$. Moreover each $n_i\neq 0$ thanks to the minimality of $r$. We also have that $\alpha\neq 0$ since each $\phi_i(a_i)\neq 0$ thanks to the identities
	$\phi_i(f_i^k(a_i))=\phi_i(a_i)^{d^k}$, $f_i^k(a_i)\to\infty$ as $k\to\infty$, and $\phi_i(\infty)=\infty$. In the various constructions and estimates below, 
	$C$ denotes a large positive integer depending on the initial data and $L$ denotes a large positive integer depending on $C$ and the initial data.  These $C$ and $L$ are fixed and we will describe how to choose them later. After fixing $C$ and $L$, we use $k$ to denote a sufficiently large integer.

	Put $\Phi(X_1,\ldots,X_r)=\phi_1(X_1)^{n_1}\cdots\phi_r(X_r)^{n_r}$.
	Consider an auxiliary function of the form:
	$$\cA(X_1,\ldots,X_r):=P_1\Phi+P_2\Phi^2+\cdots+P_L\Phi^L$$
	where each $P_{\ell}\in \cO_K[X_1,\ldots,X_r]$
	 has degree at most $L$ in each of the variables $X_1,\ldots,X_r$ for every $1\leq \ell\leq L$. 
	
	Put
	$n:=\displaystyle\max_{1\leq i\leq r}n_i$. Then $\cA$ is a sum of terms of the form
	$X_1^{\delta_1}\cdots X_r^{\delta_r}$ where $\delta_i\in\Z$ and 
	$\delta_i \leq (n+1)L$ for $1\leq i\leq r$. The number of tuples $(\delta_1,\ldots,\delta_r)$
	satisfying the property:
	\begin{equation}\label{eq:delta_i not too small for all i}
	\delta_i\in [-CL,(n+1)L]\ \text{for every $1\leq i\leq r$}
	\end{equation}
	is at most $(2CL)^r$ as long as $C>n+1$. Having the coefficient of $X_1^{\delta_1}\cdots X_r^{\delta_r}$ vanish for every $(\delta_1,\ldots,\delta_r)$ satisfying \eqref{eq:delta_i not too small for all i} is the same as having the coefficients of the $P_i$'s satisfy a homogeneous system of at most  
	$(2CL)^r$ many linear equations defined over $K$. Since there are more than $L^{r+1}$ many such 
	coefficients, when $L>(2C)^r$ we can always find $P_1,\ldots,P_L$ not all of which are zero such 
	that the coefficient of every $X_1^{\delta_1}\cdots X_r^{\delta_r}$ in $\cA$ where the $\delta_i$'s satisfy \eqref{eq:delta_i not too small for all i} is zero. Therefore for every $(x_1,\ldots,x_r)\in\C^r$
	such that $\vert x_i\vert$ is sufficiently large for every $i$, we have:
	\begin{equation}\label{eq:upper bound for cA}
	\vert\cA(x_1,\ldots,x_r)\vert\ll \max\{1\leq i\leq r:\ \vert x_i\vert^{-CL}\prod_{j\neq i}\vert x_j\vert^{(n+1)L}\}.
	\end{equation}
	
	Let $P(X_1,\ldots,X_r,Y)=P_1Y+P_2Y^2+\ldots+P_LY^L$ which is a non-zero element of $K[X_1,\ldots,X_r,Y]$ since some $P_i$ is non-zero. Identity \eqref{eq:pr of thm alpha=...} together with the identity $\displaystyle\phi_i(f_i^k(a_i))=\phi_i(a_i)^{d^k}$ for $1\leq i\leq r$ yield:
	$$\alpha^{d^k}=\Phi(f_1^k(a_1),\ldots,f_r^k(a_r))\ \text{for every $k\in\N_0$.}$$
	Therefore
	\begin{align*}
	\begin{split}
	\vert P(f_1^k(a_1),\ldots,f_r^k(a_r),\alpha^{d^k})\vert&=\vert\cA(f_1^k(a_1),\ldots,f_r^k(a_r))\vert\\
	&\ll \max\left\{1\leq i\leq r:\ \vert f_i^k(a_i)\vert^{-CL}\prod_{j\neq i}\vert f_j^k(a_j)\vert^{(n+1)L}\right\}
	\end{split}
	\end{align*}
	when $k$ is sufficiently large so that each $\vert f_i^k(a_i)\vert$ is sufficiently large. Let $C_1,C_2>1$ be real numbers depending only on the $f_i$'s, the $a_i$'s, and $\alpha$ such that 
	\begin{equation}\label{eq:C1 and C2}
	C_1^{d^k}<\vert \sigma(f_i^k(a_i))\vert < C_2^{d^k}\ \text{and}\ \vert\sigma(\alpha)\vert <C_2
	\end{equation}
	for $1\leq i\leq r$, $\sigma\in\cG$, and for every large integer $k$. Combining this with the previous inequality, we now have:
	\begin{equation}\label{eq:very strong upper bound}
	\vert P(f_1^k(a_1),\ldots,f_r^k(a_r),\alpha^{d^k})\vert\ll C_1^{-CLd^k}C_2^{r(n+1)Ld^k}
	\end{equation}
	for every large integer $k$ where the implied constants are independent of $k$. Put:
	$$\Vert P\Vert=\max\{\vert\sigma(c)\vert:\ \sigma\in \cG\ \text{and $c$ is a coefficient of a monomial term in $P$}\}$$
	and let $C_3$ be a positive integer such that $C_3^{d^k}\alpha^{d^k}$ and the $C_3^{d^k}f_i^k(a_i)$'s
	are algebraic integers for $1\leq i\leq r$.
	
	For every $\sigma\in \cG$ and for $1\leq i\leq L$, we have
	$$\vert \sigma(P_i(f_1^k(a_1),\ldots,f_r^k(a_r)))\vert \leq \Vert P\Vert (L+1)^r C_2^{Lrd^k}$$
	thanks to \eqref{eq:C1 and C2}, the definition of $\Vert P\Vert$, and the given properties of $P_i$. Then we have:
	\begin{align}\label{eq:upper bound sigma(P)}
	\begin{split}
		\vert \sigma(P(f_1^k(a_1),\ldots,f_r^k(a_r),\alpha^{d^k}))\vert&=\left\vert\sum_{i=1}^L \sigma(P_i(f_1^k(a_1),\ldots,f_r^k(a_r)))\sigma(\alpha)^{id^k}\right\vert\\
		&\leq L\Vert P\Vert (L+1)^r C_2^{(r+1)Ld^k}.
	\end{split}
	\end{align}
	
	Put $D=[K:\Q]$ then \eqref{eq:very strong upper bound} and \eqref{eq:upper bound sigma(P)}
	yields:
	\begin{equation}\label{eq:upper bound on Norm K/Q}
	\vert \Norm_{K/\Q}(P(f_1^k(a_1),\ldots,f_r^k(a_r),\alpha^{d^k}))\vert\ll C_1^{-CLd^k}C_2^{(r(n+1)+(r+1)D)Ld^k}
	\end{equation}
	for all sufficiently large $k$; we emphasize again that the implied constants are independent of $k$.
	From the choice of $C_3$ and the given properties of the $P_i$'s, we have:
	\begin{equation}\label{eq:upper bound on denominator}
	C_3^{D(r+1)Ld^k}\cdot \Norm_{K/\Q}(P(f_1^k(a_1),\ldots,f_r^k(a_r),\alpha^{d^k}))\in\Z
	\end{equation}
	for every large $k$.
	We now choose $C$ and $L$ such that:
	\begin{align}
	\begin{split}
		&\text{the earlier inequalities}\ C>n+1\ \text{and}\ L>(2C)^r\ \text{hold and}\\
		&C_4:=C_1^{-C}C_2^{r(n+1)+(r+1)D}<\frac{1}{C_3^{D(r+1)}}.
	\end{split}
	\end{align}
	With this choice, \eqref{eq:upper bound on Norm K/Q} and \eqref{eq:upper bound on denominator} implies that there exists a positive integer $N$ such that
	\begin{equation}\label{eq:P(...)=0}
	P(f_1^k(a_1),\ldots,f_r^k(a_r),\alpha^{d^k})=0\ \text{for every integer $k\geq N$}.
	\end{equation}
	
	Let $\varphi: (\bP^1)^{r+1}\rightarrow (\bP^1)^{r+1}$ be given by
	$$\varphi(x_1,\ldots,x_r,y)=(f_1(x_1),\ldots,f_r(x_r),y^d)$$
	and let $x=(a_1,\ldots,a_r,\alpha)$. Then \eqref{eq:P(...)=0} means $\varphi^k(x)$ belongs to the proper Zariski closed set defined by $P=0$ for every $k\geq N$. 
	At this point, it is an easy exercise to show that $x$ belongs to a proper $\varphi$-preperiodic subvariety of $(\bP^1)^{r+1}$ and we include the short proof here for the convenience of the readers.
	Let $Z$ be the Zariski closure of the $\varphi^k(x)$'s with $k\geq N$. Among all the irreducible components of $Z$, let $Z'$ be one with the largest dimension. Then the set $\{\varphi^k(x):\ k\geq N\}\cap Z'$ is Zariski dense in $Z'$; otherwise we could replace $Z'$ by the Zariski closure of this set and have a smaller Zariski closed set than $Z$ containing all the $\varphi^k(x)$ for $k\geq N$. Then for every $m\geq 0$, the set $\{\varphi^{k+m}(x):\ k\geq N\}\cap \varphi^m(Z')$ is Zariski dense in $\varphi^m(Z')$ and this implies $\varphi^m(Z')\subseteq Z$. By the maximality of $\dim(Z')$, we must have that $\varphi^m(Z')$ is an irreducible component of $Z$ for every $m\geq 0$. This proves that $Z'$ is $\varphi$-preperiodic.

	Since $\alpha\neq 0$ and $\alpha$ is not a root of unity, Proposition~\ref{prop:MS}(a) implies that
	$Z'=V\times \bP^1$ where $V$ is a proper subvariety of $(\bP^1)^r$ that is preperiodic under
	$(f_1,\ldots,f_r)$. We must have $r\geq 2$ since otherwise $V$ is a preperiodic point and this is a contradiction since $a_1$ is not $f_1$-preperiodic. The projection from $V$ to each coordinate factor $\bP^1$ is non-constant since each $a_i$ is not $f_i$-preperiodic. Proposition~\ref{prop:MS}(b) now implies that there exists $1\leq i\neq j\leq r$ such that $(a_i,a_j)$ belongs to a non-fibered 
	curve in $(\bP^1)^2$ that is preperiodic under $(f_i,f_j)$.
	
	For the rest of this section, fix $k\in\N_0$ and $m\in \N$ such that $(f_i^k(a_i),f_j^k(a_j))$
	belongs to a non-fibered curve $\Gamma$ in $(\bP^1)^2$ that is invariant under $(f_i^m,f_j^m)$.
	By Proposition~\ref{prop:MS}(c) there exist non-constant polynomials $A$, $B$, and $Q$ such that $\Gamma$ is parametrized by $(A,B)$ and
	$f_i^m\circ A=A\circ Q$ and $f_j^m\circ B=B\circ Q$.
	Let $t\in \Qbar$ such that $A(t)=f_i^k(a_i)$ and $B(t)=f_j^k(a_j)$. Let $\psi$ be a B\"ottcher coordinate of $Q$, then Proposition~\ref{prop:curves to Bottcher coordinates} implies that
	$\displaystyle\frac{\phi_i(f_i^k(a_i))}{\psi(t)^{\deg(A)}}$ and 
	$\displaystyle\frac{\phi_j(f_j^k(a_j))}{\psi(t)^{\deg(B)}}$
	are roots of unity. Hence there is a relation of the form:
	$$\phi_i(a_i)^{d^k\deg (B)}=\zeta \phi_j(a_j)^{d^k\deg(A)}$$
	where $\zeta$ is a root of unity.
	We now use this relation to eliminate $\phi_i(a_i)$ from \eqref{eq:pr of thm alpha=...} to lower the value of $r$. This contradicts the minimality of $r$ and we finish the proof.
	
	\section{Proof of the corollaries and comments about further work}
	\subsection{Proof of Corollary~\ref{cor:algebraicity of hatH}} Let $K$ be a number field such that $K/\Q$ is Galois, $f\in K[z]$, $a\in K$, and $K$ contains a $(d-1)$-th root of the leading coefficient of $f$. Although this is not strictly necessary, we enlarge $K$ so that it has no real embedding. Let $\phi(z)\in zK[[1/z]]$ be a B\"ottcher coordinate of $f$.
	
	As explained before, (ii) and (iii) are equivalent to each other since archimedean places of $K$ correspond to pairs of complex conjugate embeddings. We have (iii) implies (i) since there is no archimedean contribution to $\hat{H}_f(a)$ while the non-archimedean contribution is algebraic thanks to Lemma~\ref{lem:gfv=clogp}. 
	
	It remains to prove that (i) implies (iii). We prove this by contradiction: suppose that the set
	$$S=\{v\in M_K^{\infty}:\ \sup_n \vert f^n(a)\vert_v=\infty\}$$
	is non-empty. For each $v\in S$, let $B_v$ be as in Proposition~\ref{prop:properties of Bottcher coord}. Replacing $a$ by some $f^k(a)$ if necessary, we may assume that $\vert f^n(a)\vert_v>B_v$ for every $n\in\N_0$ and every $v\in S$. List elements of $S$ as $v_1,\ldots,v_r$; note that $n_{v_i}=2$ for every $i$ since each $v_i$ is complex. For $1\leq i\leq r$, let
	$\sigma_i$ and $\bar{\sigma}_i$ be the pair of complex conjugate embeddings in $\Gal(K/\Q)$ that 
	correspond to $v_i$ meaning $\vert x\vert_{v_i}=\vert\sigma_i(x)\vert$ for every $x\in K$. 
	For $1\leq i\leq r$, we choose the B\"ottcher coordinates $\phi_{\sigma_i(f)}$ and $\phi_{\bar{\sigma}_i(f)}$
	of $\sigma_i(f)$ and $\bar{\sigma}_i(f)$ so that they are complex conjugate Laurent series. 
	Let
	$$H_0=\exp\left(\frac{1}{[K:\Q]}\sum_{v\in M_K^0} n_vg_{f,v}(a)\right)$$
	be the non-archimedean contribution to $\hat{H}_f(a)$ which is algebraic thanks to Lemma~\ref{lem:gfv=clogp}. From Proposition~\ref{prop:properties of Bottcher coord}, we have:
	$$\hat{H}_f(a)=H_0\cdot \left(\prod_{i=1}^r \phi_{\sigma_i(f)}(\sigma_i(a))\phi_{\bar{\sigma}_i(f)}(\bar{\sigma}_i(a))\right)^{1/[K:\Q]}.$$
	Theorem~\ref{thm:main} implies that $\hat{H}_f(a)/H_0$ is a root of unity but this is impossible since
	each 
	$$\vert\phi_{\sigma_i(f)}(\sigma_i(a))\vert=\vert\phi_{\bar{\sigma}_i(f)}(\bar{\sigma}_i(a))\vert=\vert\phi_f(a)\vert_{v_i}>1$$
	and we finish the proof.
	
	\subsection{Proof of Corollary~\ref{cor:irrational hhat}} We assume that there is a non-trivial linear relation $\displaystyle\sum_{i=1}^r c_i\hat{h}_{f_i}(a_i)=0$ with $c_i\in\Z$ for every $i$ and arrive at a contradiction. This yields the multiplicative relation $\displaystyle\prod_{i=1}^r\hat{H}_{f_i}(a_i)^{c_i}=1$. For each $1\leq i\leq r$, let
	$$H_{i,0}=\exp\left(\frac{1}{[K:\Q]}\sum_{v\in M_K^0} n_vg_{f_i,v}(a_i)\right)\ \text{and}\ H_{i,\infty}=\exp\left(\frac{1}{[K:\Q]}\sum_{v\in M_K^\infty} n_vg_{f_i,v}(a_i)\right)$$
	be respectively the non-archimedean and archimedean contribution to $\hat{H}_{f_i}(a_i)$.
	 The earlier multiplicative relation becomes:
	\begin{equation}\label{eq:mult relation becomes} 
	 \prod_{i=1}^r H_{i,\infty}^{c_i}=\prod_{i=1}^r H_{i,0}^{-c_i}.
	\end{equation}
	The given local condition implies that the $H_{i,0}$'s are multiplicatively independent, hence the RHS of \eqref{eq:mult relation becomes} is a positive real algebraic number that is not $1$. As in the previous subsection, we can express the LHS of \eqref{eq:mult relation becomes} into the form
	$$\prod_{j=1}^m \phi_{g_j}(b_j)^{\gamma_j}$$
	where each $(g_j,b_j)$ is Galois conjugate to some $(f_i,a_i)$ and each $\gamma_j$ is a rational number; we allow the possibility that $\sup_{n} \vert f_i^n(a_i)\vert_v<\infty$ for $1\leq i\leq r$ and $v\in M_K^\infty$ in which $m=0$ and the above expression is the  empty product. Theorem~\ref{thm:main} now implies that the RHS of \eqref{eq:mult relation becomes} is a root of unity, contradiction.
	
	\subsection{Further comments} Results in this paper are just the beginning and we end this paper with comments on some further direction. The most natural continuation of Theorem~\ref{thm:main} is to give a characterization of $(f_1,a_1),\ldots,(f_r,a_r)$ so that 
	$$\phi_{f_1}(a_1)^{n_1}\cdots\phi_{f_r}(a_r)^{n_r}=1$$
	where $n_1,\ldots,n_r\in\Z$ not all of which are $0$. Our work indicates that the answer should be when there exist $1\leq i\neq j\leq r$ such that $(a_i,a_j)$ belongs to an $(f_i,f_j)$-preperiodic curve. In the proof of Theorem~\ref{thm:main}, one has the auxiliary polynomial $P=P_1Y+\ldots+P_LY^L$ and it is obvious that $P$ is non-zero when one of the $P_i$'s is non-zero. However, if we imitate the same construction in this further problem, we will have $P=P_1+\ldots+P_r$ and it is possible that $P=0$. Therefore some further machinery or even an entirely different construction of auxiliary function is needed. Once the above problem is solved, we can give a characterization of the $(f_i,a_i)$'s 
	so that the $\hat{h}_{f_i}(a_i)$'s are linearly dependent over $\Q$ without the further local conditions as in Corollary~\ref{cor:irrational hhat}. Professor Jason Bell also suggests the possibility of strengthening existing results for the Dyanmical Mordell-Lang problem \cite{BGT16_TD} for the split polynomial map $(f,g)$ on $\bP^1\times\bP^1$.
	
	After all these, there is yet another highly interesting direction. It is usually the case that results in diophantine geometry motivate those in arithmetic dynamics, on the other hand once we establish the above results we can speculate what might happen in diophantine geometry. Consider points $A_1$ and $A_2$ on elliptic curves $E_1$ and $E_2$ respectively and let $\hat{h}_i$ be the N\'eron-Tate canonical height on $E_i$ and the question is when 
	$\hat{h}_1(A_1)$ and $\hat{h}_2(A_2)$ are linearly dependent over $\Q$. Based on what happens in the above arithmetic dynamics, one might speculate that except for the special case when either $A_1$ or $A_2$ is torsion and up to applying an automorphism $\sigma\in\cG$ the answer is when $(A_1,A_2)$ belongs to a non-fibered torsion translate of an elliptic curve in $E_1\times E_2$.
	
	\bibliographystyle{amsalpha}
	\bibliography{Transcendence} 	

\newcommand{\etalchar}[1]{$^{#1}$}
\def\cprime{$'$} \def\cprime{$'$} \def\cprime{$'$} \def\cprime{$'$}
\providecommand{\bysame}{\leavevmode\hbox to3em{\hrulefill}\thinspace}
\providecommand{\MR}{\relax\ifhmode\unskip\space\fi MR }
\providecommand{\MRhref}[2]{%
  \href{http://www.ams.org/mathscinet-getitem?mr=#1}{#2}
}
\providecommand{\href}[2]{#2}
\begin{thebibliography}{DGK{\etalchar{+}}19}

\bibitem[BDJ20]{BDJ20_AT}
J.~P. Bell, J.~Diller, and M.~Jonsson, \emph{A transcendental dynamical
  degree}, Acta Math. \textbf{225} (2020), 193--225.

\bibitem[BDJK]{BDJK21_BM}
J.~P. Bell, J.~Diller, M.~Jonsson, and H.~Krieger, \emph{Birational maps with
  transcendental dynamical degree}, preprint, arXiv:2107.04113.

\bibitem[BG06]{BG06_HI}
E.~Bombieri and W.~Gubler, \emph{Heights in {D}iophantine geometry}, New
  Mathematical Monographs, vol.~4, Cambridge University Press, Cambridge, 2006.

\bibitem[BGT16]{BGT16_TD}
J.~P. Bell, D.~Ghioca, and T.~J. Tucker, \emph{The dynamical {M}ordell-{L}ang
  conjecture}, Mathematical Surveys and Monographs, vol. 210, American
  Mathematical Society, 2016.

\bibitem[BN21]{BN21_AA}
J.~P. Bell and K.~D. Nguyen, \emph{An analogue of {R}uzsa's conjecture for
  polynomials over finite fields}, J. Combin. Theory Ser. A \textbf{178}
  (2021), Article 105337.

\bibitem[B{\"o}t04]{Bot04_TP}
L.~E. B{\"o}ttcher, \emph{The principal laws of convergence of iterates and
  their application to analysis (in {R}ussian)}, Izv. Kazan. Fiz.-Mat. Obshch.
  \textbf{14} (1904), 155--234.

\bibitem[CH08]{CH08_DF1}
Z.~Chatzidakis and E.~Hrushovski, \emph{Difference fields and descent in
  algebraic dynamics. {I}}, Journal of the Institute of Mathematics of Jussieu
  \textbf{7} (2008), no.~4, 653–686.

\bibitem[CHP02]{CHP02_MT}
Z.~Chatzidakis, E.~Hrushovski, and Y.~Peterzil, \emph{Model theory of
  difference fields. ii. {P}eriodic ideals and the trichotomy in all
  characteristics}, Proc. Lond. Math. Soc. \textbf{85} (2002), 257--311.

\bibitem[CLP17]{CLP17_PF}
E.~Croot, V.~F. Lev, and P.~P. Pach, \emph{Progression-free sets in
  $\mathbb{Z}_4^n$ are exponentially small}, Ann. of Math. (2) \textbf{185}
  (2017), 331--337.

\bibitem[DG19]{DMG19_RO}
L.~DeMarco and D.~Ghioca, \emph{Rationality of dynamical canonical height},
  Ergodic Theory and Dynamical Systems \textbf{39} (2019), no.~9, 2507–2540.

\bibitem[DGK{\etalchar{+}}19]{DGKNTY19_BH}
L.~DeMarco, D.~Ghioca, H.~Krieger, K.~D. Nguyen, T.~Tucker, and H.~Ye,
  \emph{Bounded height in families of dynamical systems}, Int. Math. Res. Not.
  IMRN \textbf{2019} (2019), 2453--2482.

\bibitem[EG17]{EG17_OL}
J.~S. Ellenberg and D.~Gijswijt, \emph{On large subsets of $\mathbb{F}_q^n$
  with no three-term arithmetic progression}, Ann. of Math. (2) \textbf{185}
  (2017), 339--343.

\bibitem[FG22]{FG22_TA}
C.~Favre and T.~Gauthier, \emph{The arithmetic of polynomial dynamical pairs},
  Annals of Mathematics Studies 214, Princeton University Press, Princeton,
  2022.

\bibitem[Gut16]{Gut16_PM}
L.~Guth, \emph{Polynomial methods in combinatorics}, University Lecture Series,
  vol.~64, American Mathematical Society, Providence, 2016.

\bibitem[Ing13]{Ing13_AG}
P.~Ingram, \emph{Arboreal {G}alois representations and uniformization of
  polynomial dynamics}, Bull. Lond. Math. Soc. \textbf{45} (2013), 301--308.

\bibitem[Lin82]{Lin82_UD}
F.~Lindemann, \emph{{\"U}ber die {Z}ahl $\pi$}, Math. Ann. \textbf{20} (1882),
  213--225.

\bibitem[Mah29]{Mah29_AE}
K.~Mahler, \emph{{A}rithmetische {E}igenschaften der {L}\"{o}sungen einer
  {K}lasse von {F}unktionalgleichungen}, Math. Ann. \textbf{101} (1929),
  342--367.

\bibitem[Mah30a]{Mah30_AE}
\bysame, \emph{{A}rithmetische {E}igenschaften einer {Klasse}
  transzendental-transzendente {F}unktionen}, Math. Z. \textbf{32} (1930),
  545--585.

\bibitem[Mah30b]{Mah30_UV}
\bysame, \emph{\"{U}ber das {V}erschwinden von {P}otenzreihen mehrerer
  {V}er\"{a}nderlichen in speziellen {P}unktfolgen}, Math. Ann. \textbf{103}
  (1930), 573--587.

\bibitem[Med]{Med07_MS}
A.~Medvedev, \emph{Minimal sets in {A}{C}{F}{A}}, Thesis (Ph.D.)--University of
  California, Berkeley. 2007.

\bibitem[Mil06]{Mil06_DI}
J.~Milnor, \emph{Dynamics in one complex variable}, third ed., Annals of
  Mathematics Studies 160, Princeton University Press, Princeton and Oxford,
  2006.

\bibitem[MS14]{MS14_IV}
A.~Medvedev and T.~Scanlon, \emph{Invariant varieties for polynomial dynamical
  systems}, Ann. of Math. (2) \textbf{179} (2014), 81--177.

\bibitem[Ngu]{Ngu21_TS}
K.~D. Nguyen, \emph{Transcendental series of reciprocals of {F}ibonacci and
  {L}ucas numbers}, to appear in Algebra Number Theory, 2021.

\bibitem[Ngu15a]{Ngu15_AI}
K.~Nguyen, \emph{Algebraic independence of local conjugacies and related
  questions in polynomial dynamics}, Proc. Amer. Math. Soc. \textbf{143}
  (2015), 1491--1499.

\bibitem[Ngu15b]{Ngu15_SA}
\bysame, \emph{Some arithmetic dynamics of diagonally split polynomial maps},
  Int. Math. Res. Not. IMRN \textbf{2015} (2015), 1159--1199.

\bibitem[Nis96]{Nis96_MF}
K.~Nishioka, Mahler functions and transcendence, Lecture Notes in Math., vol.
  1631, Springer-Verlag, 1996.

\bibitem[Pak17]{Pak17_PS}
F.~Pakovich, \emph{Polynomial semiconjugacies, decompositions of iterations,
  and invariant curves}, Ann. Sc. Norm. Super. Pisa Cl. Sci. (5) \textbf{17}
  (2017), 1417--1446.

\bibitem[Rit20]{Rit20_OT}
J.~F. Ritt, \emph{On the iteration of rational functions}, Trans. Amer. Math.
  Soc. \textbf{21} (1920), 348--356.

\bibitem[Sil07]{Sil07_TA}
J.~H. Silverman, \emph{The {A}rithmetic of {D}ynamical {S}ystems}, Graduate
  Texts in Mathematics, vol. 241, Springer, New York, 2007.

\bibitem[Sil13]{Sil13_MO}
J.~Silverman, \emph{Transcendence of canonical heights}, Comments on
  MathOverflow (2013), URL:https://mathoverflow.net/q/150873 (version:
  2013-12-05).

\end{thebibliography}

\end{document}